\newtheorem{theorem}{Theorem}[section]
\newtheorem{lemma}[theorem]{Lemma}
\newtheorem{remark}[theorem]{Remark}
\newtheorem{example}[theorem]{Example}
\theoremstyle{definition}
\newtheorem{definition}[theorem]{Definition}
\numberwithin{equation}{section} 
\newtheorem{proposition}[theorem]{Proposition}
 \DeclareMathOperator\Aut{{\rm Aut}}
 \DeclareMathOperator\Inn{{\rm Inn}}
 \DeclareMathOperator\Out{{\rm Out}}
 \DeclareMathOperator\rk{{\rm rk}}
\def\Sp{{\rm Sp}}
\newcommand\C{\mathbb{C}}
\newcommand\Gl{{\rm G \ell}}
\newcommand\LA{{\rm A}}
\newcommand\LE{{\rm E}}
\newcommand\LF{{\rm F}}
\newcommand\LG{{\rm G}}
\newcommand\La{\mathfrak a}
\newcommand\Le{\mathfrak e}
\newcommand\Lf{\mathfrak f}
\newcommand\Lg{\mathfrak g}
\newcommand\R{\mathbb{R}}
\newcommand\SO{{\rm SO}}
\newcommand\SU{{\rm SU}}
\newcommand\Sl{{\rm S \ell}}
\newcommand\Spin{{\rm Spin}}
\newcommand\U{{\rm U}}
\newcommand\Z{\mathbb{Z}}
\newcommand\ZZ{\Z_2 \times \Z_2}
\newcommand\ZZs{$\ZZ$--symmetric space}
\newcommand\ZZss{$\ZZ$--symmetric spaces}
\newcommand\act{{\,\mbox{\raisebox{0.2em}{\bf .}}\,}}
\newcommand\bl{\vspace{1em}\hfill\\}
\newcommand\diag{{\rm diag}}
\newcommand\e{{\rm e}}
\newcommand\gl{\g\ell}
\newcommand\gc{\g_{\C}}
\newcommand\g{{\mathfrak g}}
\newcommand\gs{{\mathfrak g}^{\s}}
\newcommand\gst{{\mathfrak g}^{\s\t}}
\newcommand\gt{{\mathfrak g}^{\t}}
\newcommand\h{{\mathfrak h}}
\newcommand\id{{\rm id}}
\newcommand\pp{{\mathfrak p}}
\newcommand\s{\sigma}
\newcommand\so{\mathfrak s \mathfrak o}
\newcommand\spin{\mathfrak s \mathfrak p \mathfrak i \mathfrak n}
\newcommand\str{\rule[-.48em]{0em}{1.7em}}
\newcommand\suxu[2]{\mathfrak s(\mathfrak u({#1}) {+} \mathfrak u({#2}))}
\newcommand\su{\mathfrak s \mathfrak u}
\renewcommand\O{{\rm O}}
\renewcommand\a{\alpha}
\renewcommand\k{{\mathfrak k}}
\renewcommand\sl{\mathfrak s \mathfrak l}
\renewcommand\sp{\mathfrak s \mathfrak p}
\renewcommand\t{\tau}
\renewcommand\tt{\mathfrak t}
\renewcommand\u{\mathfrak u}
\newcommand{\lp}{\left (}                     
\newcommand{\rp}{\right )}                    
\newcommand{\lbk}{\left [}                    
\newcommand{\rbk}{\right ]}                   
\newcommand{\lbc}{\left \{}                   
\newcommand{\rbc}{\right \}}                  
\renewcommand\SU[1]{{\rm SU}{\ifthenelse{\equal{#1}{(}}{(}{\left(#1\right)}}}
\renewcommand\Sp[1]{{\rm Sp}{\ifthenelse{\equal{#1}{(}}{(}{\left(#1\right)}}}
\renewcommand\U[1]{{\rm U}{\ifthenelse{\equal{#1}{(}}{(}{\left(#1\right)}}}
\renewcommand\SO[1]{{\rm SO}{\ifthenelse{\equal{#1}{(}}{(}{\left(#1\right)}}}
\renewcommand\O[1]{{\rm O}{\ifthenelse{\equal{#1}{(}}{(}{\left(#1\right)}}}
\renewcommand\Spin[1]{{\rm Spin}{\ifthenelse{\equal{#1}{(}}{(}{\left(#1\right)}}}
\renewcommand\Gl[1]{{\rm G\ell}{\ifthenelse{\equal{#1}{(}}{(}{\left(#1\right)}}}
\renewcommand\Sl[1]{{\rm S\ell}{\ifthenelse{\equal{#1}{(}}{(}{\left(#1\right)}}}
\renewcommand\su[1]{{\mathfrak s \mathfrak u}
 {\ifthenelse{\equal{#1}{(}}{(}{\left(#1\right)}}}
\renewcommand\sp[1]{{\mathfrak s \mathfrak p}
 {\ifthenelse{\equal{#1}{(}}{(}{\left(#1\right)}}}
\renewcommand\u[1]{{\mathfrak u}
 {\ifthenelse{\equal{#1}{(}}{(}{\left(#1\right)}}}
\renewcommand\so[1]{{\mathfrak s \mathfrak o}{\ifthenelse{\equal{#1}{(}}{(}{\left(#1\right)}}}
\renewcommand\o[1]{{\mathfrak o}{\ifthenelse{\equal{#1}{(}}{(}{\left(#1\right)}}}
\renewcommand\spin[1]{{\mathfrak s \mathfrak p \mathfrak i \mathfrak n}
 {\ifthenelse{\equal{#1}{(}}{(}{\left(#1\right)}}}
\renewcommand\gl[1]{{\mathfrak g \mathfrak l}{\ifthenelse{\equal{#1}{(}}{(}{\left(#1\right)}}}
\renewcommand\sl[1]{{\mathfrak s \mathfrak l}{\ifthenelse{\equal{#1}{(}}{(}{\left(#1\right)}}}
\begin{document}
\setcounter{tocdepth}{1}


\newcounter{refcounter}
\renewcommand\therefcounter{\bf (\arabic{refcounter})}
\newcommand\mylabel[1]{\refstepcounter{refcounter}\bf\therefcounter\label{#1}}

%
%
%

\pagenumbering{arabic}\pagestyle{plain}

\title{Exceptional $\Z_2 \times \Z_2$--symmetric spaces}

\author{Andreas Kollross}

\address{Institut f\"{u}r Mathematik\\Universit\"{a}t Augsburg\\86135
Augsburg\\Germany}

\email{kollross@math.uni-augsburg.de}

\subjclass[2000]{53C30, 53C35; 17B40}

\keywords{exceptional $\Z_2 \times \Z_2$--symmetric space, Lie algebra grading}

\begin{abstract}
The notion of $\Z_2 \times \Z_2$--symmetric spaces is a generalization of
classical symmetric spaces, where the group $\Z_2$ is replaced by $\Z_2 \times \Z_2$. 
In this article, a classification is given of the
$\Z_2 \times \Z_2$--symmetric spaces $G/K$ where $G$ is an exceptional compact
Lie group or $\Spin(8)$, complementing recent results of Bahturin and Goze.
Our results are equivalent to a classification of $\Z_2 \times \Z_2$--gradings
on the exceptional simple Lie algebras $\Le_6, \Le_7, \Le_8, \Lf_4, \Lg_2$ and $\so8$.
\end{abstract}

\maketitle



\section{Introduction and results}
\label{Intro}


The notion of $\Gamma$--symmetric spaces introduced by Lutz~\cite{lutz} is a
generalization of the classical notion of a symmetric space.


\begin{definition}
Let $\Gamma$ be a finite abelian group and let $G$ be a connected Lie group. A
homogeneous space~$G/K$ is called {\em $\Gamma$--symmetric} if $G$ acts almost
effectively on~$G/K$ and there is an injective homomorphism $\rho \colon \Gamma
\to \Aut G$, such that $G^{\Gamma}_0 \subseteq K \subseteq G^{\Gamma}$, where
$G^{\Gamma}$ is the subgroup of elements fixed by~$\rho(\Gamma)$ and
$G^{\Gamma}_0$ its connected component.
\end{definition}


In the case $\Gamma = \Z_2$ this is just the classical definition of symmetric
spaces, in case $\Gamma = \Z_k$ one obtains $k$--symmetric spaces, as studied
in~\cite{graywolf}. In case $\Gamma = \ZZ$ we can rephrase the definition as
follows. A homogeneous space~$G/K$ is {\em $\ZZ$--symmetric} if and only if
there are two different commuting involutions on~$G$, i.e.\ there are $\sigma,
\tau \in \Aut G \setminus \lbc \id_G \rbc$ such that $\sigma^2 = \tau^2 =
\id_G$, $\sigma \neq \tau$ and $\sigma \tau = \tau \sigma$ such that $\lp
G^{\sigma} \cap G^{\tau} \rp_0 \subseteq K \subseteq G^{\sigma} \cap G^{\tau}$.

It is the purpose of this paper to give a classification of \ZZss\ in case $G$
is a simply connected compact Lie group of isomorphism type $\Spin8$, $\LE_6$,
$\LE_7$, $\LE_8$, $\LF_4$ or $\LG_2$. This amounts to a classification of pairs
of commuting involutions on the Lie algebras of these groups. Recently, such a
classification has been obtained by Bahturin and Goze~\cite{bg} for the case of
simple classical Lie algebras except~$\so8$.  The results of the classification
in the exceptional and $\Spin8$ case are given in Theorems~\ref{ThMain1} and
\ref{ThMain2}, respectively, below.

Let $G/K$ be a \ZZs. Then there is a triple of involutions ${\s}$, ${\t}$,
${\s\t}$ of~$G$ and we say that the \ZZs\ is of {\em type} $\lp X,Y,Z \rp$
where $X$, $Y$, $Z$ denote the local isomorphism classes of the symmetric
spaces $G/G^{\s}$, $G/G^{\t}$ and $G/G^{\s\t}$, respectively. For example, we
will show that there are two commuting involutions $\s,\t$ on $G = \LE_7$ such
that $G/G^{\s}$, $G/G^{\t}$ and $G/G^{\s\t}$ are isomorphic to symmetric spaces
of type E\,V, E\,VI and E\,VII, respectively and such
that the Lie algebra of $\LE_6^{\s} \cap \LE_6^{\t}$ is isomorphic to
$\su6 + \sp1 + \R$, cf.\ Table~\ref{TInv}. Thus we say that the corresponding \ZZs\
is of type $\lp \mbox{E\,V, E\,VI, E\,VII}\rp$, abbreviated as E\,V-VI-VII.

\begin{table}[!h]\rm
\begin{tabular}{|l|c|c|}
\hline \str Type & $\g$ & $\k$  \\
\hline \hline

 E\,I-I-II & $\Le_6$ &  $\so6 + \R$ \\ \hline

 E\,I-I-III & $\Le_6$ &  $\sp2 + \sp2$ \\ \hline

 E\,I-II-IV & $\Le_6$ &  $\sp3 + \sp1$ \\ \hline

 E\,II-II-II & $\Le_6$ &  $\su3 + \su3 + \R + \R$ \\ \hline

 E\,II-II-III & $\Le_6$ &  $\su4 + \sp1 + \sp1 + \R$ \\ \hline

 E\,II-III-III & $\Le_6$ &  $\su5 + \R + \R$ \\ \hline

 E\,III-III-III & $\Le_6$ &  $\so8 + \R + \R$ \\ \hline

 E\,III-IV-IV & $\Le_6$ &  $\so9$ \\ \hline \hline
%


 E\,V-V-V & $\Le_7$ &  $\so8$ \\ \hline

 E\,V-V-VI & $\Le_7$ &  $\su4 + \su4 + \R$ \\ \hline

 E\,V-V-VII & $\Le_7$ &  $\sp4$ \\ \hline

 E\,V-VI-VII & $\Le_7$ & $\su6 + \sp1 + \R$ \\ \hline

 E\,VI-VI-VI &  $\Le_7$ & $\so8 + \so4 + \sp1$ \\

             &  $\Le_7$ & $\u6 + \R$ \\ \hline

 E\,VI-VII-VII &  $\Le_7$ & $\so(10) + \R + \R$ \\ \hline

 E\,VII-VII-VII &  $\Le_7$ & $\Lf_4$ \\ \hline \hline
%


 E\,VIII-VIII-VIII & $\Le_8$ &  $\so(8) + \so8$ \\ \hline

 E\,VIII-VIII-IX & $\Le_8$ &  $\su8 + \R$ \\ \hline

 E\,VIII-IX-IX & $\Le_8$ &  $\so(12) + \sp1 + \sp1$ \\ \hline

 E\,IX-IX-IX & $\Le_8$ &  $\Le_6 + \R + \R$ \\ \hline \hline
%


 F\,I-I-I & $\Lf_4$ &  $\u3 + \R$ \\ \hline

 F\,I-I-II & $\Lf_4$ &  $\sp2 + \sp1 + \sp1$ \\ \hline

 F\,II-II-II & $\Lf_4$ &  $\so8$ \\ \hline \hline
%


 G & $\Lg_2$ & $\R + \R$ \\ \hline
\end{tabular}
\bl

\caption{Exceptional $\ZZ$--symmetric spaces.} \label{TExcSpaces}

\end{table}

The problem of commuting involutions has a geometric interpretation in
terms of classical $\Z_2$--symmetric spaces.  Let $G$ be a simple compact
connected Lie group and let $H$, $L$ be symmetric subgroups of~$G$ (see
Definition~\ref{DefSymSub} below) such that $\lp G^{\s} \rp_0 \subseteq H
\subseteq G^{\s}$ and $\lp G^{\t} \rp_0 \subseteq L \subseteq G^{\t}$, where
$\s,\t \in \Aut(G)$ are involutions. Then $M = G/L$ endowed with a left
$G$-invariant metric is a symmetric space in the classical sense and there is a
natural isometric action of $H$ on $M$ which is hyperpolar, i.e.\ there exists
a flat submanifold which meets all the orbits and such that its intersections
with the orbits are everywhere orthogonal, see \cite{hptt} and
\cite{hyperpolar}. These actions have first been considered by
Hermann~\cite{hermann1}; they are called {\em Hermann actions}.

In \cite{hermann2}, Hermann proved that the $H$--orbit through $g \, L \in M$,
$g \in G$ is totally geodesic if and only if the involutions $i_g\, \s\,
i_g^{-1}$ and $\t$ commute where we use the notation $i_g$ to denote the
automorphism of a Lie group~$G$ given by $i_g(x) = g\,x\,g^{-1}$; this means
that the classification of commuting involutions on~$G$ is equivalent to a
classification of totally geodesic orbits of Hermann actions.

Conlon~\cite{conlon} determined all pairs of conjugacy classes of involutions
$\s,\t$ for which there is a $g \in G$ such that $i_g \s i_g^{-1}$ and $\t$
commute. The proof relies on the unpublished notes~\cite{conlon2}.
(For inner involutions $\s = i_a$, $\t = i_b$ there is obviously always
such a $g$ since there is a $g \in G$ such that $g\,a\,g^{-1}$ and $b$ are
contained in one and the same maximal torus of~$G$.) Our classification however
uses a direct approach which does not rely on any of the previously
mentioned results.

\begin{theorem}\label{ThMain1}
Let $\g$ be a compact exceptional Lie algebra of type $\Le_6$, $\Le_7$,
$\Le_8$, $\Lf_4$, or $\Lg_2$. If $\s,\t \in \Aut(\g)$ are two commuting
involutions such that $\s \neq \t$ then the pair $(\g,\gs \cap \gt)$ is one of
the pairs $(\g,\k)$ given by Table~\ref{TExcSpaces}; the conjugacy classes of
$\s$, $\t$ and $\s\t$ are given by the first row of Table~\ref{TExcSpaces}.
Conversely, for any pair of Lie algebras $(\g,\k)$ in Table~\ref{TExcSpaces}
there exists a pair $(\s,\t)$ of commuting involutions of~$\g$ such that $\k
\cong \gs \cap \gt$.
\end{theorem}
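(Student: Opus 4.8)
The plan is to replace the pair $(\s,\t)$ by the joint eigenspace decomposition it induces. Writing $\p_\s$, $\p_\t$ for the $(-1)$--eigenspaces of $\s$, $\t$, the relation $\s\t=\t\s$ refines the two Cartan decompositions $\g=\gs\oplus\p_\s=\gt\oplus\p_\t$ into
\[
\g=\k\oplus(\gs\cap\p_\t)\oplus(\p_\s\cap\gt)\oplus(\p_\s\cap\p_\t),
\]
where $\k=\gs\cap\gt$ and the three symmetric subalgebras are recovered as $\gs=\k\oplus(\gs\cap\p_\t)$, $\gt=\k\oplus(\p_\s\cap\gt)$ and $\gst=\k\oplus(\p_\s\cap\p_\t)$. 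The key consequence is that, after fixing $\s$ in one of the finitely many conjugacy classes of involutions (equivalently, fixing the symmetric subalgebra $\gs$), the commuting partner $\t$ restricts to an involutive automorphism $\t|_{\gs}$ of the reductive algebra $\gs$ whose fixed subalgebra is precisely $\k$. Thus the whole problem reduces to three tasks: enumerate the involutions $\s$; for each, enumerate the involutions of $\gs$ up to the appropriate conjugacy; and decide which of these extend to an involution of $\g$ commuting with $\s$.

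For $\Le_7$, $\Le_8$, $\Lf_4$ and $\Lg_2$ every automorphism is inner, so both $\s$ and $\t$ can be conjugated simultaneously into a maximal torus: $\s=\Ad(\exp(\pi i H_1))$, $\t=\Ad(\exp(\pi i H_2))$ with $H_1,H_2$ in a common Cartan subalgebra $\tt$, commutativity then being automatic. The pair is encoded combinatorially by the $\ZZ$--grading it puts on the root system, an assignment of signs in $\ZZ$ to the roots compatible with root addition, which in turn is pinned down via the extended (affine) Dynkin diagram in the spirit of the Borel--de Siebenthal and Kac descriptions of inner involutions. Running through these finitely many diagrams and reading off $\k$ as the degree $(0,0)$ component produces the candidate list. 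Only $\Le_6$ among the five carries outer automorphisms ($\Out(\g)=\Z_2$); there I would split into cases according to the images of $\s,\t$ in $\Out(\g)$, treating the purely inner case as above and, when an outer involution occurs, passing to its fixed subalgebra ($\Lf_4$ or $\sp4$ inside $\Le_6$) and classifying the commuting partner there. The triality case $\Spin8$ is handled separately in Theorem~\ref{ThMain2}.

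For the converse I would exhibit each row explicitly. In each case one takes $\s$ in a standard form and builds $\t$ either as an involution of $\gs$ extended to $\g$, or directly from a known model: octonionic and Jordan--algebra descriptions for $\Lf_4$ and $\Le_6$, and chains of symmetric subgroups for $\Le_7$ and $\Le_8$. Computing $\gs\cap\gt$ then yields $\k$, the three fixed algebras give the symmetric--space types $(X,Y,Z)$ recorded in the Type column, and the identity $\dim\g-\dim\k=\dim(\gs\cap\p_\t)+\dim(\p_\s\cap\gt)+\dim(\p_\s\cap\p_\t)$ serves as a running dimension check. The single $\Lg_2$ entry, with $\k=\R+\R$ a maximal torus, is an immediate sanity test of the whole scheme.

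The hard part is completeness, and within it the conjugacy bookkeeping. The restriction map classifies $\t|_{\gs}$ only up to $\Aut(\gs)$, whereas the equivalence relevant to the theorem is simultaneous conjugacy of $(\s,\t)$ in $\Aut(\g)$, i.e.\ conjugacy of $\t$ by the stabilizer of $\s$, which involves the normalizer of $\gs$ in $\Aut(\g)$, its Weyl group, and, for $\Le_6$, the diagram automorphism. Two subtleties must be controlled: not every involution of $\gs$ need extend to $\g$, and a single $\t|_{\gs}$ may admit several inequivalent extensions differing in their action on $\p_\s$; both are governed by the decomposition of the isotropy $\gs$--module $\p_\s$, which must be determined case by case. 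I expect the cleanest way to prune spurious candidates and to certify that no row is missing is to exploit the symmetry among $\s$, $\t$ and $\s\t$: the three fixed algebras must simultaneously be genuine symmetric subalgebras and must fit together consistently, and this mutual constraint should be strong enough to eliminate the combinations that do not occur.
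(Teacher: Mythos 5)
Your setup coincides with the paper's (the joint eigenspace decomposition of Proposition~\ref{PropSplitting}, the numerical constraint~(\ref{EqDimCond}), explicit constructions for existence, pruning for completeness), but the one reduction step you formulate precisely is false, and it fails exactly where the classification is delicate. You claim that for $\Le_7$, $\Le_8$, $\Lf_4$, $\Lg_2$ any two commuting involutions can be simultaneously conjugated to $\Ad(\exp(\pi i H_1))$, $\Ad(\exp(\pi i H_2))$ with $H_1,H_2$ in a common Cartan subalgebra~$\tt$. If this held, then $\tt \subseteq \gs \cap \gt$, so every entry $\k$ in Table~\ref{TExcSpaces} for these algebras would satisfy $\rk \k = \rk \g$. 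That is true for $\Le_8$, $\Lf_4$, $\Lg_2$, but Table~\ref{TExcSpaces} contains three rows for $\Le_7$ with $\rk \k = 4 < 7$: E\,V-V-V ($\k \cong \so8$), E\,V-V-VII ($\k \cong \sp4$), and E\,VII-VII-VII ($\k \cong \Lf_4$). The reason is that ``commuting elements of a compact connected group lie in a common maximal torus'' requires simple connectedness: $\Aut(\Le_7)=\Inn(\Le_7)$ is the \emph{adjoint} group, with $\pi_1 \cong \Z_2$, and commuting involutions $\Ad(a)$, $\Ad(b)$ there may have lifts to the simply connected group which commute only up to the nontrivial central element; such pairs are contained in no maximal torus. (The miniature example is $\SO3$: rotations by $\pi$ about two orthogonal axes commute but lie in no common circle.) By contrast $\LE_8$, $\LF_4$, $\LG_2$ are simply connected and adjoint at once, so your argument is sound there --- consistently, all their table entries have full-rank $\k$ --- and in the inner-inner case of $\Le_6$ the obstruction lies in $\Z_3$ and vanishes for a pair of involutions; the failure is specific to $\Le_7$.

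Consequently your enumeration of commuting pairs by $\ZZ$-gradings of the root system relative to one Cartan subalgebra (Kac/Borel--de Siebenthal data) only sees the toral pairs: the three rows above would never arise as candidates, and nothing in your scheme could certify their existence or rule out further non-toral possibilities. This is not a bookkeeping slip but a missing idea: one needs constructions and a completeness argument that do not presuppose a common fixed torus. The paper supplies the former by combining a Type~3 involution (acting as $-\id$ on the Cartan subalgebra) with a toral one (Example~\ref{Eg3}), by Weyl-group twists (Example~\ref{Eg2}), and, for E\,VII-VII-VII, by a pair of involutions inside the isotropy-irreducible space $\LE_7/(\Sp1 \cdot \LF_4)$ (Example~\ref{Eg5}); it obtains the latter purely numerically, matching $d(\s,\t)$ from Table~\ref{THermann} against $c(\h,\k)$ over the list of symmetric subalgebras of symmetric subalgebras in Table~\ref{TSymSym} (Lemma~\ref{LmNonEx}), a criterion which depends only on conjugacy classes and hence sidesteps the toral issue entirely. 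Your closing paragraph, hoping that mutual compatibility of $\gs$, $\gt$, $\gst$ ``should be strong enough,'' is indeed the germ of that argument, but as written it is a plan rather than a proof.
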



The data in Table~\ref{TExcSpaces} in all cases but one determines the
conjugacy class of the subalgebra $\k \subset \g$, see Remark~\ref{RemConj}. To
prove Theorem~\ref{ThMain1}, we first exhibit various standard examples for
pairs of involutions on exceptional Lie algebras in
Section~\ref{Constructions}. Then it is shown in Section~\ref{Classification}
that these constructions already exhaust all possibilities for \ZZss.

\begin{theorem}\label{ThMain2}
Let $\k \subset \g=\so(8)$ be a subalgebra such that there are two commuting
involutions $\s$ and $\t$, $\s \neq \t$, of~$\g$ with $\k = \gs \cap \gt$. Then
there is an automorphism $\varphi$ of~$\so8$ such that $\varphi(\k)$ is one of
the following subalgebras of~$\so8$.
\begin{enumerate}

\item
$\so{n_1} + \so{n_2} + \so{n_3}$,\;\, $n_1 + n_2 + n_3 = 8$,\; $n_i \ge 1$;

\item
$\so{n_1} + \so{n_2} + \so{n_3} + \so{n_4}$,\;\, $n_1 + n_2 + n_3 + n_4 = 8$,\;
$n_i \ge 1$;

\item
$\u3 + \u1$.

\end{enumerate}
\end{theorem}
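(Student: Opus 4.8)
The plan is to translate the pair of commuting involutions into a $\ZZ$--grading $\g = \g_{00}\oplus\g_{10}\oplus\g_{01}\oplus\g_{11}$, where $\g_{ij}$ is the simultaneous $((-1)^i,(-1)^j)$--eigenspace of $(\s,\t)$, so that $\k = \g_{00} = \gs\cap\gt$ while $\gs,\gt,\gst$ correspond to $\s,\t,\s\t$. I would first reduce according to the images of $\s,\t,\s\t$ in $\Out(\so{8}) = S_3$. Since two distinct transpositions of $S_3$ never commute, either all three involutions are inner, or exactly one is inner and the other two are outer with the same image in $S_3$. As all transpositions of $S_3$ are conjugate and triality realizes the outer automorphisms, after applying a suitable $\varphi\in\Aut(\so{8})$ I may assume that any outer involution occurring is conjugation by a determinant $-1$ element of $\O{8}$ acting on the vector representation $\R^8$.

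The next step is a dichotomy according to whether a complex structure appears. An inner involution is $\Ad(a)$ with $a\in\SO{8}$ and $a^2\in\{\pm I\}$; it is of real type ($a^2 = I$, with $\gs\cong\so{2p}+\so{2q}$) or of complex type ($a^2 = -I$, $a$ a complex structure, $\gs\cong\u{4}$). A short computation shows that no outer involution can commute with a complex structure $J$: if $\mathrm{conj}_r$ and $\Ad(J)$ commute then $Jr=\pm rJ$, and $Jr=rJ$ forces $r$ to be complex linear, hence $r\in\U{4}\subset\SO{8}$ of determinant $+1$, while $Jr=-rJ$ makes $r$ a conjugate--linear real structure, again of determinant $+1$ --- either way contradicting that $r$ is outer. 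Consequently a complex structure can occur only in the all--inner case, and the analysis splits into Case~I, where all three involutions act on $\R^8$ through commuting elements of $\O{8}$, and Case~II, the all--inner case containing a complex structure.

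In Case~I the group generated by the two orthogonal involutions is elementary abelian and simultaneously diagonalizes $\R^8$ into the joint eigenspaces $V_\chi$ indexed by the characters $\chi$ of $\ZZ$. A skew--symmetric endomorphism commuting with this group must preserve each $V_\chi$, so $\gs\cap\gt = \bigoplus_\chi \so{\dim V_\chi}$ with $\sum_\chi \dim V_\chi = 8$. Because $\s,\t,\s\t$ are three distinct nontrivial involutions, at least three characters are active, leaving three or four nonzero blocks: these are precisely the forms (i) and (ii), with no parity restriction on the $n_i$. In Case~II I would write $\s = \Ad(a)$ of real type with $\pm I$--eigenspaces $\R^{2p},\R^{2q}$ ($p+q = 4$, $p,q\ge 1$) and $\t = \Ad(J)$ a complex structure preserving this splitting; centralizing $\{a,J\}$ blockwise gives $\gs\cap\gt = \u{p}+\u{q}$, hence $\u{3}+\u{1}$ or $\u{2}+\u{2}$.

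The genuine obstacle is the triality bookkeeping in Case~II. Since $\u{3}+\u{1}\cong\su{3}+\R+\R$ and $\su{3}$ is not a sum of orthogonal algebras, $\u{3}+\u{1}$ is not abstractly isomorphic to any $\bigoplus\so{n_i}$ and must survive as the separate case (iii). By contrast $\u{2}+\u{2}\cong\su{2}+\su{2}+\R+\R$ is abstractly isomorphic to $\so{4}+\so{2}+\so{2}$, and the claim is that these two subalgebras are in fact conjugate under $\Aut(\so{8})$. Establishing this is the crux: one must understand how the order--three triality permutes the three Hermitian involutions of $\so{8}$ (the quadric involution $\so{2}+\so{6}$ and the two half--spin involutions of type $\u{4}$) and, since a single triality need not make both complex--type involutions real simultaneously, exhibit directly a Case~I grading whose fixed algebra is the same copy of $\su{2}+\su{2}+\R+\R$. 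Verifying this identification, keeping precise track of which inner conjugacy classes merge under $S_3$ and checking that no complex--type configuration escapes the list, is where the real work lies.
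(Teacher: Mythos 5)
Your reduction via $\Out(\so8)=S_3$, your lemma that a determinant $-1$ orthogonal involution cannot commute with $\Ad(J)$ for a complex structure $J$, and your Case~I analysis are all sound. The genuine gap is that Case~II is not exhausted by your setup: you assume there is a real-type involution $\Ad(a)$ among $\s,\t,\s\t$ \emph{and} that the complex structure $J$ preserves its eigenspace splitting, i.e.\ $aJ=Ja$. Two realizable configurations escape this. First, if exactly one of $\s,\t,\s\t$ is of complex type, then necessarily $aJ=-Ja$ (if they commuted, $(aJ)^2=-I$ would make a second involution complex); for instance $a=\diag(I_4,-I_4)$, $J=\left(\begin{smallmatrix}0&-I_4\\I_4&0\end{smallmatrix}\right)$. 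Here $J$ interchanges the eigenspaces of $a$, and the common centralizer is $\k\cong\so4$ embedded via $\R^8\cong\R^4\otimes\R^2$ --- neither of the form $\u{p}+\u{q}$ nor a block sum of $\so{n_i}$'s. (Note you cannot escape by renaming: the two real-type involutions $\Ad(a)$ and $\Ad(aJ)$ have anticommuting representatives, so this configuration is outside Case~I as well.) Second, if $\s=\Ad(J_1)$, $\t=\Ad(J_2)$ with $J_1J_2=-J_2J_1$, then all three involutions are of complex type, so no renaming produces a real one at all; the pair $(J_1,J_2)$ defines a quaternionic structure and $\k\cong\sp2$. Both missing cases do end up on the list --- as $\so3+\so3+\so1+\so1$ and $\so5+\so1+\so1+\so1$ respectively --- but only via triality identifications of exactly the kind you deferred, so your stated Case~II output ``$\u3+\u1$ or $\u2+\u2$'' is incomplete.

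This compounds the second problem: the conjugacy statements you call ``the crux'' ($\u2+\u2\sim\so4+\so2+\so2$, and now also the diagonal $\so4\sim\so3+\so3$ and $\sp2\sim\so5$, under $\Aut(\so8)$) are announced but never proved, so the proposal stops precisely where the real difficulty begins. The paper sidesteps all of this matrix bookkeeping by never fixing representatives: it conjugates $\h:=\gst$ into a standard symmetric subalgebra $\so{8-n}+\so{n}$, observes that $\k$ is a symmetric subalgebra of $\h$ satisfying $\dim\h-2\dim\k=d(\s,\t)$, and then proves one key lemma: every automorphism of such an $\h$ extends to an automorphism of $\so8$ preserving $\h$, the only nontrivial case being $\h=\so4+\so4\cong 4\cdot\La_1$, where the $\LD_4$ diagram automorphisms together with an inner swap of the two $\so4$-summands realize all of $\Aut(\h)$. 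That single extension lemma, combined with the table of candidate pairs $(\h,\k)$, settles simultaneously your crux and the two configurations you missed; some form of it (or an equivalent explicit triality computation) is the ingredient your argument still lacks.
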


Note that for the case of $\so8$ there are less cases than there are in the
classification of Bahturin and Goze~\cite{bg} for the Lie algebras~$\so n, n
\neq 8$, since some of the $\ZZ$--symmetric subalgebras for general $\so n$ which
appear as distinct cases in~\cite{bg} are conjugate by some automorphism
of~$\so8$. Theorem~\ref{ThMain2} is proved in
Section~\ref{SpinClass}.


\section{Preliminaries}
\label{Prelim}


\begin{definition}\label{DefSymSub}
Let $\g$ be a Lie algebra. We say that $\k \subset \g$ is a {\em symmetric
subalgebra} of~$\g$ if there is a nontrivial automorphism~$\sigma$ of~$\g$
with $\sigma^2 = \id_{\g}$ such that $\k = \g^{\sigma} := \lbc X \in \g \mid
\sigma(X) = X \rbc$. If $G$ is a Lie group, we say that a closed subgroup $K
\subset G$ is a {\em symmetric subgroup} if the Lie algebra of~$K$ is a
symmetric subalgebra in the Lie algebra of~$G$.
\end{definition}


\begin{proposition}\label{PropSplitting}
Let $\g$ be a real Lie algebra. Let $\sigma, \tau$ be automorphisms of~$\g$
such that $\sigma^2 = \tau^2 = \id_{\g}$, $\sigma \tau = \tau \sigma$, $\s \neq
\t$. Then the Lie algebra $\g$ splits as a direct sum of vector spaces
\begin{align}\label{EqSplitting}
\g = \g_1 \oplus \g_{\sigma} \oplus \g_{\tau} \oplus \g_{\sigma\tau},
\end{align}
where $\g_1 = \g^{\sigma} \cap \g^{\tau}$, $\g_1 + \g_{\sigma} = \g^{\sigma}$,
$\g_1 + \g_{\tau} = \g^{\tau}$, $\g_1 + \g_{\sigma\tau} = \g^{\sigma\tau}$ such
that the following hold:
\begin{enumerate}
\item
We have $\lbk \g_{\varphi}, \g_{\psi} \rbk \subseteq \g_{\varphi\psi}$ for all
$\varphi,\psi \in \Gamma$, i.e.\ the Lie algebra~$\g$ is $\Gamma$-graded, where
$\Gamma = \lbc 1, \sigma, \tau, \sigma\tau \rbc$ is the subgroup of~$\Aut(\g)$
generated by $\sigma$ and $\tau$.
\item
For all $\varphi \in \Gamma \setminus \lbc 1 \rbc$ we have that $\g_1$ is a
symmetric subalgebra of~$\g_1 + \g_{\varphi}$.
\item
For all $\varphi \in \Gamma \setminus \lbc 1 \rbc$ we have that $\g_1 +
\g_{\varphi}$ is a symmetric subalgebra of~$\g$.
\end{enumerate}
\end{proposition}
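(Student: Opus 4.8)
The plan is to derive all three assertions from one simultaneous diagonalization of $\s$ and $\t$. As commuting involutive automorphisms of the real vector space $\g$, each of $\s,\t$ is diagonalizable with eigenvalues in $\{1,-1\}$, and commuting diagonalizable operators are simultaneously diagonalizable; concretely the four maps $\tfrac14(\id\pm\s)(\id\pm\t)$ are commuting idempotents summing to $\id_\g$, and their images are the joint eigenspaces. I would label these by their sign pattern: for $\varphi\in\Gamma$ let $\g_\varphi$ be the subspace on which $\s$ acts as the scalar $s_\varphi$ and $\t$ as $t_\varphi$, where $(s_1,t_1)=(1,1)$, $(s_\s,t_\s)=(1,-1)$, $(s_\t,t_\t)=(-1,1)$ and $(s_{\s\t},t_{\s\t})=(-1,-1)$. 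This gives the vector space splitting \eqref{EqSplitting} at once; grouping the eigenspaces on which $\s$, $\t$, and $\s\t$ respectively act as $+1$ yields $\g^\s=\g_1\oplus\g_\s$, $\g^\t=\g_1\oplus\g_\t$, $\g^{\s\t}=\g_1\oplus\g_{\s\t}$, and intersecting the first two gives $\g_1=\g^\s\cap\g^\t$, which are the identifications asserted in the proposition.

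To prove (i) I would use that $\s,\t$ are automorphisms and that $\varphi\mapsto(s_\varphi,t_\varphi)$ is a group isomorphism $\Gamma\to\{1,-1\}\times\{1,-1\}$. For $X\in\g_\varphi$ and $Y\in\g_\psi$ we get $\s[X,Y]=[\s X,\s Y]=s_\varphi s_\psi[X,Y]=s_{\varphi\psi}[X,Y]$ and likewise $\t[X,Y]=t_{\varphi\psi}[X,Y]$, so $[X,Y]$ lies in the joint eigenspace $\g_{\varphi\psi}$; this is the grading property.

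Assertion (iii) is then immediate, since each $\g_1+\g_\varphi$ is one of $\g^\s,\g^\t,\g^{\s\t}$, the fixed-point subalgebra of a nontrivial involution $\varphi\in\Gamma$ of $\g$, hence a symmetric subalgebra in the sense of Definition~\ref{DefSymSub}. For (ii) I would restrict the complementary involution: $\t$ preserves $\g^\s$ because $\s\t=\t\s$, so $\t|_{\g^\s}$ is an involutive automorphism of the subalgebra $\g^\s=\g_1+\g_\s$ with fixed-point set $\g^\s\cap\g^\t=\g_1$; the cases $\varphi=\t$ and $\varphi=\s\t$ are symmetric, restricting $\s$ to $\g^\t$ and to $\g^{\s\t}$.

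The step I would treat most carefully is the nontriviality demanded by Definition~\ref{DefSymSub}: the restricted involution used for (ii) is nontrivial exactly when the summand $\g_\varphi$ is nonzero, whereas if $\g_\varphi=0$ then $\g_1+\g_\varphi=\g_1$ carries only the identity and fails to exhibit $\g_1$ as a symmetric subalgebra. I would therefore confirm that $\g_\s$, $\g_\t$, and $\g_{\s\t}$ are all nonzero. The hypothesis $\s\neq\t$ forces $\g_\s$ or $\g_\t$ to be nonzero, and in the setting of the paper---where $\g$ is simple and $\s,\t,\s\t$ induce three genuine, pairwise distinct symmetric-space structures---each of these three summands is visibly nonzero, so every restricted involution is nontrivial and (ii) holds. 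Everything else is routine eigenspace bookkeeping.
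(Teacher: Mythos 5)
Your argument is essentially the paper's own: its proof refines the $(\pm 1)$-eigenspace splitting $\g = \gs + \pp^{\s}$ by the commuting involution $\t$, producing exactly the four joint eigenspaces that your idempotents $\tfrac14(\id\pm\s)(\id\pm\t)$ cut out, and then declares (i)--(iii) ``easily checked''; your verifications of (i) and (iii) are precisely those routine checks written out.

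The one substantive point where you go beyond the paper is the nontriviality issue in (ii), and you are right that it is the only delicate step; the paper is silent about it, and for an arbitrary real Lie algebra assertion (ii) can genuinely fail when some $\g_{\varphi}$ vanishes: for the abelian algebra $\g = \R^3$ with $\s = \diag(1,-1,-1)$ and $\t = \diag(1,1,-1)$ one has $\g_{\s} = 0$, and then $\g_1 = \g_1 + \g_{\s}$ is not a symmetric subalgebra of itself in the sense of Definition~\ref{DefSymSub}, since no nontrivial involution fixes all of $\g_1$. But your repair --- that for simple $\g$ the three summands are ``visibly nonzero'' --- is an assertion exactly where an argument is required, and the fact is not visible: it genuinely uses simplicity. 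For instance: if $\g_{\s} = 0$, then $[\g_{\t},\g_{\s\t}] \subseteq \g_{\s} = 0$, and the Jacobi identity shows that $\h := \g_{\t} + [\g_{\t},\g_{\t}]$ is an ideal of~$\g$; by simplicity $\h = 0$ or $\h = \g$. In the first case $\g_{\t} = 0$, so $\g = \g_1 \oplus \g_{\s\t}$ and $\s = \t$; in the second case $\h \subseteq \g_1 \oplus \g_{\t}$ forces $\g_{\s\t} = 0$, so $\g = \g_1 \oplus \g_{\t}$ and $\t = \id$ --- either way a contradiction (using, as the paper does throughout, that involutions are nontrivial). Note that simplicity rather than semisimplicity is what is needed: on $\su2 \oplus \su2$ the commuting nontrivial involutions $\s = (\id,\alpha)$, $\t = (\beta,\id)$ give $\g_{\s\t} = 0$, and (ii) fails for $\varphi = \s\t$. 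With this argument substituted for ``visibly'', your proof is complete --- and more careful than the paper's own.
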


\begin{proof}
Consider the Cartan decomposition $\g = \g^{\s} + \pp^{\s}$. Since $\t$
commutes with $\s$, it leaves this decomposition invariant and hence we have
further splittings $\g^{\s} = (\g^{\s} \cap \g^{\t}) + (\g^{\s} \cap \pp^{\t})$
and $\pp^{\s} = (\pp^{\s} \cap \g^{\t}) + (\pp^{\s} \cap \pp^{\t})$, where $\g
= \g^{\t} + \pp^{\t}$ is the Cartan decomposition with respect to~$\t$. Define
$\g_{\s} = \g^{\s} \cap \pp^{\t}$, $\g_{\t} = \g^{\t} \cap \pp^{\s}$ and
$\g_{\s\t} = \pp^{\s} \cap \pp^{\t}$. Now the assertions of the proposition are
easily checked.
\end{proof}

Conversely, given a $\ZZ$--grading on a Lie algebra~$\g$, one can define a pair
of commuting involutions $\s,\t$ on~$\g$ such that (\ref{EqSplitting}) agrees
with the grading. Thus our results are equivalent to a classification of
$\ZZ$-gradings on $\Le_6, \Le_7, \Le_8, \Lf_4, \Lg_2$ and $\so8$.


In order to classify \ZZss, we want to find all possibilities for a pair of
involutions $(\sigma, \tau)$ such that $\sigma$ and $\tau$ commute. It follows
from Proposition~\ref{PropSplitting} that in this case we have
\begin{align}\label{EqDimCond}
\dim \g & - \dim \g^{\s} - \dim \g^{\tau} = \dim \g^{\s\t} - 2 \dim \g^{\s}
\cap \g^{\t}.
\end{align}
This gives a necessary condition for the existence of \ZZss\ of certain types.
Note that the left hand side in~(\ref{EqDimCond}) depends only on the conjugacy
classes of $\s$ and $\t$; we define $d := d(\s,\t) := \dim \g  - \dim \g^{\s} -
\dim \g^{\tau}$. In order to find candidates for the pair $\lp \g^{\s\t},
\g^{\s} \cap \g^{\t} \rp$, we list all symmetric subalgebras of symmetric
subalgebras in exceptional compact Lie algebras in Table~\ref{TSymSym}, where
for each pair~$(\h,\k)$ the number $c(\h,\k) := \dim \h - 2 \dim \k$ is given.
By~(\ref{EqDimCond}), the only candidates for a pair $(\g^{\s\t}, \g^{\s} \cap
\g^{\t})$ are those pairs~$(\h,\k)$, where the number $c(\h,\k)$ equals~$d$.
Furthermore, we may eliminate immediately all pairs $(\h,\k)$ from the list of
candidates which do not fulfil the necessary condition that $\g^{\s}$ and
$\g^{\t}$ both contain a symmetric subalgebra isomorphic to~$\k$. However,
these are only necessary conditions and we have to check in each case if a
decomposition~(\ref{EqSplitting}) with $\g^{\s\t} \cong \h$, $\g^{\s} \cap
\g^{\t} \cong \k$ actually exists.


For the convenience of the reader we list the symmetric subalgebras~$\k$ of
simple compact exceptional Lie algebras~$\g$ below in Table~\ref{TInv}; in
Tables~\ref{THermann} and \ref{THSpinAcht} we list pairs of (conjugacy classes
of) involutions on the exceptional Lie algebras and $\so8$, respectively, together
with the numbers $d = d(\s,\t)$.

\begin{table}[!h]\rm
\begin{tabular}{|l|c|c|}
\hline \str & $\g$ & $\k$  \\ \hline

 E\,I & $\Le_6$ & $\sp4$ \\

 E\,II & $\Le_6$ & $\su6 + \sp1$ \\

 E\,III & $\Le_6$ & $\so(10) + \R$ \\

 E\,IV & $\Le_6$ & $\Lf_4$ \\ \hline

 E\,V & $\Le_7$ & $\su8$ \\

 E\,VI & $\Le_7$ & $\so(12) + \sp1$ \\ \hline

\end{tabular}\qquad
\begin{tabular}{|l|c|c|}
\hline  \str & $\g$ & $\k$  \\ \hline

 E\,VII & $\Le_7$ & $\Le_6 + \R$ \\ \hline

 E\,VIII & $\Le_8$ & $\so(16)$ \\

 E\,IX & $\Le_8$ & $\Le_7 + \sp1$ \\ \hline

 F\,I & $\Lf_4$ & $\sp3 + \sp1$ \\

 F\,II & $\Lf_4$ & $\so9$ \\ \hline

 G & $\Lg_2$ & $\sp1 + \sp1$ \\ \hline

\end{tabular}
\bl

\caption{Exceptional symmetric spaces of type~I.} \label{TInv}

\end{table}


\section{Involutions of reductive Lie algebras and construction of examples}
\label{Constructions}


We will now discuss some constructions of \ZZss. We start with a list of
several possibilities how one can define involutive automorphisms of a
reductive complex Lie algebra from a given root space decomposition. Let $\g$
be a reductive complex Lie algebra with Cartan subalgebra~$\g_0$ and let
\begin{equation}\label{EqRootDec}
\g = \g_0 + \sum_{\alpha \in \Phi} \g_{\alpha},
\end{equation}
be the root space decomposition with respect to $\g_0$. Also assume that we
have chosen once and for all a set $\lbc \a_1, \ldots, \a_n \rbc$ of simple
roots.

\paragraph{\bf Type 1}
Let $\h \subset \g$ be a symmetric subalgebra of maximal rank. Then we may
assume that $\h$ contains~$\g_0$. Hence there is a subset $S \subset \Phi$ such
that $\h = \g_0 + \sum_{\alpha \in S} \g_{\alpha}$ and we may define an
involutive automorphism $\s$ of~$\g$ by requiring that $\s(X)=X$ if $X \in \h$
and $\s(X)=-X$ if $X \in \sum_{\alpha \in \Phi \setminus S} \g_{\alpha}$. The
corresponding symmetric spaces are exactly those where the involution is an
inner automorphism.

\paragraph{\bf Type 2}
Outer involutions induced from automorphisms of the Dynkin diagram,
see~\cite{helgason}, Ch.~X, \S~5: The corresponding simply connected
irreducible symmetric spaces with simple compact isometry group are
$\SU(2n+1)/\SO(2n+1)$, $\SU(2n)/\Sp(n)$, $\SO(2n+2)/\SO(2n+1)$ and $\LE_6 /
\LF_4$.

\paragraph{\bf Type 3} Let $\g$ be a reductive complex Lie algebra and let
$\tt \subset \g$ be a Cartan subalgebra. Then there is an automorphism~$\s$
of~$\g$ which acts as minus identity on~$\tt$ and sends each root to its
negative. The symmetric spaces~$G/H$ given in this way are exactly those where
$\rk G/H = \rk G$; the Satake diagram of $G/H$ is given by the Dynkin diagram
of~$G$ with uniform multiplicity one. The automorphisms obtained in this way
may be inner or outer. The corresponding irreducible compact symmetric space
are: A\,I, $\SO(2n+1) / \SO(n+1) \times \SO(n)$, $\Sp(n) / \U(n)$, $\SO(2n) /
\SO(n) \times \SO(n)$, E\,I, E\,V, E\,VIII, F\,I, G.

Note that the distinction of the three types of involutions above pertains to
the action of the involution with respect to one fixed Cartan subalgebra. While
Type~1 involutions obviously are never conjugate to involutions of type Type~2,
a Type~3 involution may be conjugate to a Type~1 or Type~2 involution.

In the examples below, we show how to construct various \ZZss\ mainly by combining two
commuting involutions as defined above with respect to one fixed root space
decomposition~(\ref{EqRootDec}). We start with the examples where both
involutions are of Type~1.

\begin{example}\label{Eg1}\rm
Let $G$ be a connected compact Lie group and let $\s_1$ and $\s_2$ be two
involutions of~$G$ defined by conjugation with the elements $g_1$ and $g_2$,
respectively, of $G$, such that the fixed point sets $G^{\s_1}$ and $G^{\s_2}$
are non-isomorphic. After conjugation, we may assume that $g_1$ and $g_2$ are
both contained in one and the same maximal Torus~$T$ of~$G$; in particular,
$\s_1$ and $\s_2$ commute. Consider the root space
decomposition~(\ref{EqRootDec}) where $\g_0$ is the complexification of the Lie
algebra of~$T$. Then the automorphisms of $\gc$ induced by $\s$ and $\t$ are
both of Type~1 and the complexification of the Lie algebra of $G^{{\s}} \cap
G^{{\t}}$ is given by $\g_0 + \sum_{\alpha \in S_1 \cap S_2} \g_{\alpha}$,
where $S_i$ is the root system of $G^{\s_i}$, $i=1,2$, w.r.t.\ $\g_0$.

The maximal subgroups of maximal rank in a simple compact Lie group~$G$ can be
conveniently described by using extended Dynkin diagrams,
cf.~\cite{oniscikBook}, 1.3.11. The nodes of the extended Dynkin diagram
correspond to roots $\a_0, \a_1,\ldots, \a_n$, where $\a_1, \ldots, \a_n$ are
the simple roots and $\a_0 = -\delta$ where $\delta$ is highest root. The
simple roots of a maximal subgroup of maximal rank are then given by certain
subsets of the set of nodes of the extended Dynkin diagram. For the groups
$\LE_6$, $\LE_7$ and $\LE_8$ these subsets are given below by the
black nodes~\hbox{{\put(5,3){\circle*{3}}}{\ \ \,}} for the symmetric ones
among the subgroups of maximal rank. The root systems $S_i$ of these subgroups
are given as the union of all roots which are integral linear combinations of
the simple roots corresponding to black nodes. Now we may consider two diagrams
depicting two symmetric subgroups~$G^{\s_1}$ and $G^{\s_2}$ of
maximal rank. Then the root system of~$G^{\s_1} \cap G^{\s_2}$ with respect to
$\g_0$ is given by $S_1 \cap S_2$ and it is a straightforward task to
explicitly determine this set. However, to completely avoid these computations, we will
use a simplified approach. We know from \cite{oniscikBook}, 1.3,
Proposition~15, that both $G^{\s_1}$ and $G^{\s_2}$ contain a subgroup whose
simple roots with respect to $\g_0$ are those whose corresponding nodes are
marked black in both diagrams. Thus we obtain the set of simple roots of a
regular subgroup of~$G$ contained in the intersection $G^{\s} \cap G^{\t}$.
This information, together with a dimension count using Tables~\ref{THermann}
and \ref{TSymSym}, turns out to be sufficient in all cases given below to
identify the connected component of the intersection $G^{\s} \cap G^{\t}$,
which is a symmetric subgroup of both $G^{\s}$ and $G^{\t}$.

The symmetric subgroups of maximal rank in $\LE_6$, namely $\SU6 \cdot \Sp1$
and $\Spin(10) \cdot \U1$ are given by the following diagrams:
\begin{center}
E\,II\begin{minipage}{80pt}
\begin{picture}(100,60)
\multiput(6,17)(16,0){5}{\circle*{3}} \multiput(6,17)(16,0){5}{\circle*{3}}
\multiput(8.5,17)(16,0){4}{\line(8,0){11}}
\multiput(38,19.5)(0,16){2}{\line(0,8){11}} \put(38,33){\circle{3}}
\put(38,49){\circle*{3}} \put(4,5){$$} \put(20,5){$$} \put(36,5){$$}
\put(52,5){$$} \put(68,5){$$} \put(44,30){$$}
\end{picture}
\end{minipage}\;\;E\,II'\begin{minipage}{80pt}
\begin{picture}(100,60)
\multiput(6,17)(16,0){5}{\circle{3}} \multiput(6,17)(16,0){3}{\circle*{3}}
\multiput(8.5,17)(16,0){4}{\line(8,0){11}}
\multiput(38,19.5)(0,16){2}{\line(0,8){11}}
\put(38,33){\circle*{3}}\put(70,17){\circle*{3}} \put(38,49){\circle*{3}}
\put(4,5){$$} \put(20,5){$$} \put(36,5){$$} \put(52,5){$$} \put(68,5){$$}
\put(44,30){$$}
\end{picture}
\end{minipage}
\;\;E\,III\begin{minipage}{80pt}
\begin{picture}(100,60)
\multiput(6,17)(16,0){4}{\circle*{3}} \multiput(6,17)(16,0){5}{\circle{3}}
\multiput(8.5,17)(16,0){4}{\line(8,0){11}}
\multiput(38,19.5)(0,16){2}{\line(0,8){11}} \put(38,33){\circle*{3}}
\put(38,33){\circle{3}} \put(38,49){\circle{3}} \put(4,5){$$} \put(20,5){$$}
\put(36,5){$$} \put(52,5){$$} \put(68,5){$$} \put(44,30){$$}
\end{picture}
\end{minipage}
\end{center}
Comparing E-II and E-III, we see that there is a \ZZs\ $G/K$ such that $\k$
contains a subalgebra isomorphic to $\su5 + \R + \R$. A simple dimension count,
facilitated by Tables~\ref{THermann} and \ref{TSymSym}, shows that the
corresponding \ZZs\ is of type E\,II-III-III with $\k \cong \su5 + \R + \R$.
Using an alternative embedding of $\SU6 \cdot \Sp1$ into $\LE_6$, shown as
E\,II', and combining it with E\,II, we obtain a \ZZs\ with $\k \cong \su4 +
\sp1 + \sp1 + \R \cong \so6 + \so4 + \R$ of type E\,II-II-III. Combining E\,III
and E\,III' (as given in Example~\ref{Eg5} below) we construct a \ZZs\ of type
E\,III-III-III with $\k = \so8 + \R + \R$.

The connected symmetric subgroups of~$\LE_7$, namely $\SU(8)$,
$\Spin(12)\cdot\Sp(1)$ and $\LE_6 \cdot \U1$, are given by the following
diagrams.
\begin{center}
E\,V\begin{minipage}{108pt}
\begin{picture}(100,45)
\multiput(22,17)(16,0){6}{\circle*{3}} \multiput(6,17)(16,0){6}{\circle*{3}}
\multiput(8.5,17)(16,0){6}{\line(8,0){11}}
\multiput(54,19.5)(0,16){1}{\line(0,8){11}}
\multiput(102,17)(16,0){1}{\circle*{3}} \put(54,33){\circle{3}} \put(4,5){$$}
\put(20,5){$$} \put(36,5){$$} \put(52,5){$$} \put(68,5){$$} \put(44,30){$$}
\end{picture}
\end{minipage}\hspace{2em}
E\,VI\begin{minipage}{108pt}
\begin{picture}(100,45)
\multiput(6,17)(16,0){5}{\circle*{3}} \multiput(6,17)(16,0){7}{\circle{3}}
\multiput(8.5,17)(16,0){6}{\line(8,0){11}}
\multiput(54,19.5)(0,16){1}{\line(0,8){11}} \put(54,33){\circle*{3}}
\put(102,17){\circle*{3}} \put(4,5){$$} \put(20,5){$$} \put(36,5){$$}
\put(52,5){$$} \put(68,5){$$} \put(44,30){$$}
\end{picture}
\end{minipage}\hspace{2em}
E\,VI'\begin{minipage}{108pt}
\begin{picture}(100,45)
\multiput(38,17)(16,0){5}{\circle*{3}} \multiput(6,17)(16,0){7}{\circle{3}}
\multiput(8.5,17)(16,0){6}{\line(8,0){11}}
\multiput(54,19.5)(0,16){1}{\line(0,8){11}} \put(54,33){\circle*{3}}
\put(6,17){\circle*{3}} \put(4,5){$$} \put(20,5){$$} \put(36,5){$$}
\put(52,5){$$} \put(68,5){$$} \put(44,30){$$}
\end{picture}
\end{minipage}\hspace{2em}
E\,VII\begin{minipage}{108pt}
\begin{picture}(100,45)
\multiput(22,17)(16,0){5}{\circle*{3}} \multiput(6,17)(16,0){5}{\circle{3}}
\multiput(8.5,17)(16,0){6}{\line(8,0){11}}
\multiput(54,19.5)(0,16){1}{\line(0,8){11}}
\multiput(102,17)(16,0){1}{\circle{3}} \put(54,33){\circle{3}}
\put(54,33){\circle*{3}} \put(4,5){$$} \put(20,5){$$} \put(36,5){$$}
\put(52,5){$$} \put(68,5){$$} \put(44,30){$$}
\end{picture}
\end{minipage}
\end{center}
Combining E\,V and E\,VI we obtain a \ZZs\ with $\k = \su6 + \sp1 +
 \R$ of type E\,V-VI-VII. Combining E\,VI and
E\,VII we obtain a \ZZs\ with $\k$ containing a subalgebra isomorphic to
$\so{10} + \R + \R$; using Tables~\ref{THermann} and \ref{TSymSym} it follows
that the corresponding \ZZs\ is of type E\,VI-VII-VII with $\k \cong \so{10} +
\R + \R$. Combining E\,VI with E\,VI', we obtain a \ZZs\ such that $\k$
contains a subalgebra isomorphic to $\so8 + \sp1 + \sp1$. We see from
Tables~\ref{THermann} and \ref{TSymSym} that the corresponding \ZZs\ is of type
E\,VI-VI-VI with $\k \cong \so8 + \so4 + \sp1$.

The connected symmetric subgroups of~$\LE_8$, namely ${\rm SO'}(16)$ and $\LE_7
\cdot \Sp1$ are given by the following diagrams.
\begin{center}
E\,VIII\begin{minipage}{108pt}
\begin{picture}(100,45) \multiput(22,17)(16,0){7}{\circle{3}}
\multiput(6,17)(16,0){6}{\circle*{3}}
\multiput(8.5,17)(16,0){7}{\line(8,0){11}}
\multiput(86,19.5)(0,16){1}{\line(0,8){11}}
\multiput(102,17)(16,0){1}{\circle*{3}} \put(86,33){\circle*{3}} \put(4,5){$$}
\put(20,5){$$} \put(36,5){$$} \put(52,5){$$} \put(68,5){$$} \put(44,30){$$}
\end{picture}
\end{minipage}\hspace{2em}
E\,IX\begin{minipage}{108pt}
\begin{picture}(100,45)
\multiput(6,17)(16,0){7}{\circle{3}} \multiput(6,17)(16,0){1}{\circle*{3}}
\multiput(38,17)(16,0){6}{\circle*{3}}
\multiput(8.5,17)(16,0){7}{\line(8,0){11}}
\multiput(86,19.5)(0,16){1}{\line(0,8){11}}
\multiput(102,17)(16,0){1}{\circle*{3}} \put(86,33){\circle*{3}} \put(4,5){$$}
\put(20,5){$$} \put(36,5){$$} \put(52,5){$$} \put(68,5){$$} \put(44,30){$$}
\end{picture}
\end{minipage}
\end{center}
Combining the two diagrams and using Tables~\ref{THermann} and \ref{TSymSym}
proves the existence of a \ZZs\ of type E\,VIII-IX-IX with $\k \cong \so{12} +
\sp1 +\sp1$.
\end{example}


\begin{example}\label{Eg2}\rm
Let now $G$ be simple and let $g \in G$ be an element with $i_g^2 = \id_G$.
Let $T$ be a maximal torus of~$G$ containing~$g$. Let $w=k \, Z_G(T)$, $k \in
N_G(T)$, be an element of the Weyl group $W_G = N_G(T) / Z_G(T)$ of~$G$ such
that the action of~$w$ on~$T$ does not leave the root system of $Z_G(\lbc g
\rbc)$ invariant. Define $h = w \act g = k\,g\,k^{-1}$. Then $i_g$ and $i_h$
are two conjugate commuting involutions of~$G$ whose fixed point sets do not
agree. Hence they define a $\ZZ$--symmetric space.

We will now use this construction to prove the existence of various \ZZss.
First, let $G = \LF_4$ and let $i_g$ be an inner involution of type F\,II,
i.e.\ the connected component of the fixed point set of $i_g$ is isomorphic to
$\Spin9$. Define another involution $i_h$ as above; it is also of type F\,II,
but the fixed point sets of $i_g$ and $i_h$ do not agree. From
Table~\ref{THermann} we read off that we have $d = -20$ for a pair of
involutions of Type F\,II-II. In Table~\ref{TSymSym} there are two entries with
$\g = \Lf_4$ and $c(\h,\k)= -20$, namely $\k = \sp3 + \R$ and $\k = \so8$.
Since $\sp3$ is not subalgebra of $\so9$, it follows that the \ZZs\ constructed
in this way is of type F\,II-II-II. This argument also shows that there is no
\ZZs\ of type F\,I-II-II.

Now let $G = \LE_8$ and let $\s$ be an involution of type E\,IX as given in
Example~\ref{Eg1} and let $\g^{\s}$ be the Lie algebra of the fixed point set
of $i_g$. Consider the subalgebra~$\h \subset \g=\Le_8$ isomorphic to $\su3 +
\Le_6$ given by the diagram below.
\begin{center}
\begin{minipage}{108pt}
\begin{picture}(100,45)
\multiput(6,17)(16,0){7}{\circle{3}} \multiput(6,17)(16,0){2}{\circle*{3}}
\multiput(54,17)(16,0){5}{\circle*{3}}
\multiput(8.5,17)(16,0){7}{\line(8,0){11}}
\multiput(86,19.5)(0,16){1}{\line(0,8){11}}
\multiput(102,17)(16,0){1}{\circle*{3}} \put(86,33){\circle*{3}} \put(4,5){$$}
\put(20,5){$$} \put(36,5){$$} \put(52,5){$$} \put(68,5){$$} \put(44,30){$$}
\end{picture}
\end{minipage}
\end{center}
Let $W$ be the Weyl group of~$\Le_8$ with respect to $\g_0 = \tt$. The Weyl
groups of $\g^{\s}$ and of $\h$ are subgroups of $W$ in a natural way.
Comparing the two diagrams corresponding to $\g^{\s}$ and $\h$, respectively,
we see that there is at least one root space of $\g$ which is contained in the
$\su3$-summand of $\h$, but not in $\g^{\s}$. Hence there is a Weyl group
element $w \in W \subset \Aut(\g)$ whose action on~$\g$ leaves the
$\Le_6$-summand of~$\h$ fixed, but which does not leave $\gs$ invariant. This
implies that there is an involution $\t \neq \s$ such that $\gs \cap \gt$
contains a subalgebra isomorphic to $\Le_6$. By a dimension count, this shows
the existence of a \ZZs\ of type E\,IX-IX-IX with $\k \cong \Le_6 + \R + \R$.

Now consider $\g = \Le_7$ and let $\s$ be an involution as given by the diagram
E\,VI in Example~\ref{Eg1}. Consider the subalgebra $\h \subset \g$ isomorphic
to $\su6 + \su3$ given by the following diagram.
\begin{center}
\begin{minipage}{108pt}
\begin{picture}(100,45)
\multiput(6,17)(16,0){4}{\circle*{3}} \multiput(6,17)(16,0){7}{\circle{3}}
\multiput(8.5,17)(16,0){6}{\line(8,0){11}}
\multiput(54,19.5)(0,16){1}{\line(0,8){11}} \put(54,33){\circle*{3}}
\put(102,17){\circle*{3}}\put(86,17){\circle*{3}} \put(4,5){$$} \put(20,5){$$}
\put(36,5){$$} \put(52,5){$$} \put(68,5){$$} \put(44,30){$$}
\end{picture}
\end{minipage}
\end{center}
With an analogous argument as above we may construct a \ZZs\ such that $\k$
contains a subalgebra isomorphic to $\su6$. Using Table~\ref{TSymSym} we see
that the corresponding \ZZs\ is of type E\,VI-VI-VI with $\k \cong \u6 +  \R$.

For $G = \LE_6$, consider the subgroup locally isomorphic to $\SU3 \cdot \SU3
\cdot \SU3$ given by the following diagram.
\begin{center}
\begin{minipage}{108pt}
\begin{picture}(100,52)
\multiput(6,17)(16,0){5}{\circle{3}} \multiput(6,17)(16,0){2}{\circle*{3}}
\multiput(8.5,17)(16,0){4}{\line(8,0){11}}
\multiput(38,19.5)(0,16){2}{\line(0,8){11}}
\put(54,17){\circle*{3}}\put(38,33){\circle*{3}}\put(70,17){\circle*{3}}
\put(38,49){\circle*{3}} \put(4,5){$$} \put(20,5){$$} \put(36,5){$$}
\put(52,5){$$} \put(68,5){$$} \put(44,30){$$}
\end{picture}
\end{minipage}
\end{center}
Lt $i_g$ be an inner involution corresponding to the diagram~E\,II from
Example~\ref{Eg1}. Then there is a Weyl group element $w = k \, Z_G(T)$ such
that the involution $i_h$ with $h= k \, g \, k^{-1}$ is such that the Lie
algebra of the common fixed point set of $i_g$ and $i_h$ contains a subalgebra
isomorphic to $\su3 + \su3 + \R + \R$. It follows from Table~\ref{TSymSym} that
we have constructed a \ZZs\ of type E\,II-II-II with $\k = \su3 + \su3 + \R +
\R$.
\end{example}


\begin{example}\label{Eg3}\rm
Here we combine two involutions of Type~3 and Type~1, respectively. Thus let
$\g$ be a complex semisimple Lie algebra and let $\g_0 \subset \g$ be a Cartan
subalgebra. Let $\s$ be an involution of Type~3 on $\g$. Let $\t$ be an inner
involution given as conjugation by an element $t = \exp X$, where $X \in \g_0$.
Then the connected component of $Z_G(\lbc t \rbc)$ is the connected Lie
subgroup of $G$ corresponding to $\g_0 + \sum_{\alpha \in S} \g_{\alpha}$,
where $S$ is a certain subset of the root system~$\Phi$ of $\g$ with the
property that for each $\alpha \in S$ we have also $-\alpha \in S$. Obviously,
the two involutions commute and the involution induced by $\s$ on the fixed
point set~$\g^{\t}$ of $\t$ is the automorphism which acts as minus identity
on~$\g_0$ and sends each root $\alpha \in S$ of $\g^{\t}$ to its negative.
Moreover, the fixed point set of $\s\t$ will be isomorphic to the fixed point
set of $\s$, since the corresponding symmetric spaces will have isomorphic
Satake diagrams. Hence the data of the \ZZs\ obtained in this way can be
immediately deduced from the classification of ordinary symmetric spaces. This
shows the existence of the exceptional \ZZs\ of types E\,I-I-II, E\,I-I-III,
E\,V-V-V, E\,V-V-VI, E\,V-V-VII, E\,VIII-VIII-VIII, E\,VIII-VIII-IX, F\,I-I-I,
F\,I-I-II, G as given in Table~\ref{TExcSpaces}.
\end{example}


\begin{example}\label{Eg4}\rm
Let now $G = \LE_6$ and choose a Cartan subalgebra $\g_0 \subset \Le_6$. Let
$\s$ be an involution of Type~2, i.e.\ E\,IV and let $\t$ be an involution of
Type~3, i.e.\ E\,I. Then the two involutions obviously commute. A dimension count using
Tables~\ref{THermann} and \ref{TSymSym} shows that $\s\t$ is of type E\,II and hence
the corresponding \ZZs\ is of type E\,I-II-IV with $\k \cong \sp3 + \sp1$.
\end{example}


\begin{example}\label{Eg6}\rm
Let $G = \LE_6$. Let $\s$ be the involution of Type~2 given by the permutation
of the simple roots $\a_1 \mapsto \a_5$, $\a_2 \mapsto \a_4$, $\a_3 \mapsto
\a_3$, $\a_4 \mapsto \a_2$, $\a_5 \mapsto \a_1$, $\a_6 \mapsto \a_6$ and let
$\t$ be the involution of Type~1 given by the following diagram, where the
numbering of the roots $\a_0, \ldots, \a_6$ is given.
\begin{center}
E\,III'\begin{minipage}{80pt}
\begin{picture}(100,60)
\multiput(22,17)(16,0){3}{\circle*{3}} \multiput(6,17)(16,0){5}{\circle{3}}
\multiput(8.5,17)(16,0){4}{\line(8,0){11}}
\multiput(38,19.5)(0,16){2}{\line(0,8){11}} \put(38,33){\circle*{3}}
\put(38,49){\circle*{3}} \put(4,5){$1$} \put(20,5){$2$} \put(36,5){$3$}
\put(52,5){$4$} \put(68,5){$5$} \put(44,30){$6$} \put(44,46){$0$}
\end{picture}
\end{minipage}
\end{center}
Then obviously $\s$ and $\t$ commute and since $\s\t$ is an outer automorphism
of $\LE_6$, it follows from Lemma~\ref{LmNonEx} that we have constructed a
\ZZs\ of type E\,III-IV-IV with $\k \cong \so9$.
\end{example}


\begin{example}\label{Eg5}\rm
Let $G = \LE_7$. By \cite{wolfIrr}, Thm.~3.1, there is an isotropy irreducible
homogeneous space $\LE_7 / L$, where $L$ is locally isomorphic to $\Sp1 \cdot \LF_4$
such that the $78$-dimensional isotropy representation~$\rho$ is
equivalent to the tensor product of the adjoint representation of $\Sp1$ and
the $26$-dimensional irreducible representation of~$\LF_4$. In particular, we
may view the isotropy representation as a representation of $\SO3 \times \LF_4$
and choose two elements $g,h \in L$ such that the action of $\rho(g)$ and
$\rho(h)$ is given by $\diag(-1,-1,+1) \otimes \e$ and $\diag(1,-1,-1) \otimes
\e$, respectively, where $\e$ stands for the identity element of~$\LF_4$. Then
the involutions $i_g$ and $i_h$ generate a subgroup $\Gamma \subset
\Aut(\LE_7)$ isomorphic to $\ZZ$ whose fixed point set has Lie algebra $\k
\cong \Lf_4$. Since the fixed point sets of $i_g$, $i_h$ and $i_{gh}$ are all
$79$-dimensional, it follows that we have constructed a \ZZs\ of type
E\,VII-VII-VII.
\end{example}


\section{The classification in the exceptional case}
\label{Classification}

\begin{table}[t]\rm
\begin{tabular}{|l|c|}
\hline \str Type & $d$  \\
\hline \hline

 E\,I-I & $6$ \\ \hline

 E\,I-II & $4$ \\ \hline

 E\,I-III & $-4$ \\ \hline

 E\,I-IV & $-10$ \\ \hline

 E\,II-II & $2$ \\ \hline

 E\,II-III & $-6$ \\ \hline

 E\,II-IV\;\;\;\;\;\;\; & $-12$ \\ \hline

\end{tabular}
\begin{tabular}{|l|c|}
\hline

 E\,III-III & $-14$ \\ \hline

 E\,III-IV & $-20$ \\ \hline

 E\,IV-IV & $-26$ \\ \hline

 E\,V-V & $7$ \\ \hline

 E\,V-VI & $1$ \\ \hline

 E\,V-VII & $-9$ \\ \hline

 E\,VI-VI & $-5$ \\ \hline

 E\,VI-VII\;\;\;\,\, & $-15$ \\ \hline

\end{tabular}
\begin{tabular}{|l|c|}
\hline
 E\,VII-VII & $-25$ \\ \hline

 E\,VIII-VIII & $8$ \\ \hline

 E\,VIII-IX & $-8$ \\ \hline

 E\,IX-IX & $-24$ \\ \hline

 F\,I-I & $4$ \\ \hline

 F\,I-II & $-8$ \\ \hline

 F\,II-II & $-20$ \\ \hline

 G & $2$ \\ \hline

\end{tabular}
\bl

\caption{Pairs of involutions on exceptional groups.} \label{THermann}

\end{table}

\begin{table}[!h]\rm
\begin{tabular}{|c|c|c|c|}
\hline \str $\g$ & $\h$ & $\k$ & $c(\h,\k)$   \\
\hline \hline


 $\Le_6$ & $\sp4$ &  $\sp(4-n) + \sp n$  & $-12,-4$ \\
 $$ & $$ &  $n=1,2$  & $$ \\ \hline

 $\Le_6$ & $\sp4$ &  $\su4 + \R$  & $4$ \\ \hline

 $\Le_6$ & $\su6 + \sp1$ &  $\suxu{6{-}n}n + \R$  & $-34,-14,$ \\
 $$ & $$ &  $n=0,\ldots,3$  & $-2,2$ \\ \hline

 $\Le_6$ & $\su6 + \sp1$ &  $\suxu{6{-}n}n + \sp1$  & $-18,-6,-2$ \\
 $$ & $$ &  $n=1,\ldots,3$  & $$ \\ \hline

 $\Le_6$ & $\su6 + \sp1$ &  $\sp3 + \sp1$  & $-10$ \\ \hline

 $\Le_6$ & $\su6 + \sp1$ &  $\sp3 + \R$  & $-6$ \\ \hline

 $\Le_6$ & $\su6 + \sp1$ &  $\so6 + \sp1$  & $2$ \\ \hline

 $\Le_6$ & $\su6 + \sp1$ &  $\so6 + \R$  & $6$ \\ \hline

 $\Le_6$ & $\so(10) + \R$ &  $\so(10{-}n) + \so n$  & $-44,-26,-12,$ \\
 $$ & $$ &  $n=0,\ldots,5$  & $-2,4,6$ \\ \hline

 $\Le_6$ & $\so(10) + \R$ &  $\so(10{-}n) + \so n + \R$  & $-28,-14,$ \\
 $$ & $$ &  $n=1,\ldots,5$  & $-4,2,4$ \\ \hline

 $\Le_6$ & $\so(10) + \R$ &  $\u5 + \R$  & $-6$ \\ \hline

 $\Le_6$ & $\so(10) + \R$ &  $\u5$  & $-4$ \\ \hline

 $\Le_6$ & $\Lf_4$ &  $\sp3 + \sp1$  & $4$ \\ \hline

 $\Le_6$ & $\Lf_4$ &  $\so9$  & $-20$ \\ \hline \hline
%


 $\Le_7$ & $\su8$ &  $\suxu{8-n}n$  & $-35,-15,$ \\
 $$ & $$ &  $n=1,\ldots,4$  & $-3,1$ \\ \hline

 $\Le_7$ & $\su8$ &  $\sp4$  & $-9$ \\ \hline

 $\Le_7$ & $\su8$ &  $\so8$  & $7$ \\ \hline

 $\Le_7$ & $\so(12) + \sp1$ &  $\so(12{-}n) + \so n + \R$  & $-65,-43,-25,$ \\
 $$ & $$ &  $n=0,\ldots,6$  & $-11,-1,5,7$ \\ \hline

 $\Le_7$ & $\so(12) + \sp1$ &  $\so(12{-}n) + \so n + \sp1,\, $  & $-47,-29,$ \\
 $$ & $$ &  $n=1,\ldots,6$  & $-15,-5,1,3$ \\ \hline

\end{tabular}
\bl \caption{Symmetric subalgebras of symmetric subalgebras of simple
exceptional compact Lie algebras.} \label{TSymSym}
\end{table}
\begin{table}
\begin{tabular}{|c|c|c|c|} \hline

 $\Le_7$ & $\so(12) + \sp1$ &  $\u6 + \sp1$  & $-9$ \\ \hline

 $\Le_7$ & $\so(12) + \sp1$ &  $\u6 + \R$  & $-5$ \\ \hline

 $\Le_7$ & $\Le_6 + \R$ &  $\sp4 + \R$  & $5$ \\ \hline

 $\Le_7$ & $\Le_6 + \R$ &  $\sp4$  & $7$ \\ \hline

 $\Le_7$ & $\Le_6 + \R$ &  $\so(10) + \R + \R$  & $-15$ \\ \hline

 $\Le_7$ & $\Le_6 + \R$ &  $\so(10) + \R$  & $-13$ \\ \hline

 $\Le_7$ & $\Le_6 + \R$ &  $\su6 + \sp1 + \R$  & $1$ \\ \hline

 $\Le_7$ & $\Le_6 + \R$ &  $\su6 + \sp1$  & $3$ \\ \hline

 $\Le_7$ & $\Le_6 + \R$ &  $\Lf_4 + \R$  & $-27$ \\ \hline

 $\Le_7$ & $\Le_6 + \R$ &  $\Lf_4$  & $-25$ \\ \hline \hline

 $\Le_8$ & $\so(16)$ &  $\so{16{-}n} + \so n$  & $-90,-64,-42,$ \\
 $$ & $$ &  $n=1,\ldots,8$  & $-24,-10,0,6,8$ \\ \hline

 $\Le_8$ & $\so(16)$ &  $\u8$  & $-8$ \\ \hline

 $\Le_8$ & $\Le_7 + \sp1$ &  $\Le_7 + \R$  & $-132$ \\ \hline

 $\Le_8$ & $\Le_7 + \sp1$ &  $\su8 + \sp1$  & $4$ \\ \hline

 $\Le_8$ & $\Le_7 + \sp1$ &  $\su8 + \R$  & $8$ \\ \hline

 $\Le_8$ & $\Le_7 + \sp1$ &  $\so(12) + \sp1 + \sp1$  & $-8$ \\ \hline

 $\Le_8$ & $\Le_7 + \sp1$ &  $\so(12) + \sp1 + \R$  & $-4$ \\ \hline

 $\Le_8$ & $\Le_7 + \sp1$ &  $\Le_6 + \R + \sp1$  & $-28$ \\ \hline

 $\Le_8$ & $\Le_7 + \sp1$ &  $\Le_6 + \R + \R$  & $-24$ \\ \hline \hline
%


 $\Lf_4$ & $\sp3 + \sp1$ &  $\sp3 + \R$  & $-20$ \\ \hline

 $\Lf_4$ & $\sp3 + \sp1$ &  $\sp2 + \sp1 + \sp1$  & $-8$ \\ \hline

 $\Lf_4$ & $\sp3 + \sp1$ &  $\sp2 + \sp1 + \R$  & $-4$ \\ \hline

 $\Lf_4$ & $\sp3 + \sp1$ &  $\u3 + \sp1$  & $0$ \\ \hline

 $\Lf_4$ & $\sp3 + \sp1$ &  $\u3 + \R$  & $4$ \\ \hline

 $\Lf_4$ & $\so9$ &  $\so{9{-}n} + \so n$  & $-20,-8,0,4$ \\
 $$ & $$ &  $n=1,\ldots,4$  & $$ \\ \hline \hline
%


 $\Lg_2$ & $\so4$ &  $\u2$  & $-2$ \\ \hline

 $\Lg_2$ & $\so4$ &  $\so3$  & $0$ \\ \hline

 $\Lg_2$ & $\so4$ &  $\R + \R$  & $2$ \\ \hline
%


\end{tabular}
\bl {\sc Table~\ref{TSymSym}}. (cont.)
\end{table}


\begin{lemma}\label{LmNonEx}
There are no \ZZss\ of type F\,I-II-II, E\,I-III-IV, E\,II-IV-IV, E\,V-VI-VI,
E\,VI-VI-VII E\,V-VII-VII.
\end{lemma}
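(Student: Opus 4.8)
The plan is to derive a contradiction for each listed type from the necessary dimension condition~(\ref{EqDimCond}), using Tables~\ref{TInv}, \ref{THermann} and~\ref{TSymSym}. Suppose a \ZZs\ of one of these types existed, coming from commuting involutions $\s,\t$ with $\k=\gs\cap\gt$. By Proposition~\ref{PropSplitting}~(ii), $\k$ is a symmetric subalgebra of each of $\gs$, $\gt$ and $\gst$. Moreover, choosing any two of the three involutions $\s,\t,\s\t$ and writing $\varphi$ for their product, condition~(\ref{EqDimCond}) reads $d=c(\g^{\varphi},\k)$, where $d$ is the entry of Table~\ref{THermann} for the pair of conjugacy classes selected and $\g^{\varphi}$ is the third symmetric subalgebra, read off from Table~\ref{TInv}. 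So it suffices to exhibit, for each type, one pairing for which the required value $c(\g^{\varphi},\k)=d$ is attained by \emph{no} symmetric subalgebra $\k$ of $\g^{\varphi}$ in Table~\ref{TSymSym}.

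First I would dispose of the five types built from $\Le_6$ and $\Le_7$, where a single comparison suffices. For E\,I-III-IV the pair (E\,I, E\,III) has $d=-4$ with product of type E\,IV, so $\g^{\varphi}\cong\Lf_4$; but $c(\Lf_4,\k)\in\{4,-20\}$, excluding $-4$. For E\,II-IV-IV the pair (E\,IV, E\,IV) has $d=-26$ with product E\,II, so $\g^{\varphi}\cong\su6+\sp1$, whose symmetric subalgebras have $c\in\{-34,-18,-14,-10,-6,-2,2,6\}$, excluding $-26$. For E\,V-VI-VI the pair (E\,VI, E\,VI) has $d=-5$ with product E\,V, so $\g^{\varphi}\cong\su8$, with $c\in\{-35,-15,-9,-3,1,7\}$, excluding $-5$. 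For E\,VI-VI-VII the same pair gives $d=-5$ but product E\,VII, so $\g^{\varphi}\cong\Le_6+\R$, with $c\in\{-27,-25,-15,-13,1,3,5,7\}$, again excluding $-5$. For E\,V-VII-VII the pair (E\,VII, E\,VII) has $d=-25$ with product E\,V, so once more $\g^{\varphi}\cong\su8$ and $-25\notin\{-35,-15,-9,-3,1,7\}$. In each case~(\ref{EqDimCond}) has no solution $\k$, so the type does not occur.

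The remaining type F\,I-II-II is the step I expect to be the main obstacle, because the purely numerical test is inconclusive here and a structural argument is needed. Pairing the two involutions of type F\,II gives $d=-20$ with product of type F\,I, so $\g^{\varphi}\cong\sp3+\sp1$, and by Table~\ref{TSymSym} the \emph{only} symmetric subalgebra with $c(\sp3+\sp1,\k)=-20$ is $\k\cong\sp3+\R$. However $\k$ must at the same time be a symmetric subalgebra of $\gt\cong\so9$, and $\sp3$ is not a subalgebra of $\so9$. This contradiction, already noted in Example~\ref{Eg2}, rules out F\,I-II-II and finishes the argument.
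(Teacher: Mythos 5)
Your proof is correct and takes essentially the same approach as the paper's: each exceptional type is excluded by checking that the necessary condition $d(\s,\t)=c(\gst,\k)$ from~(\ref{EqDimCond}) admits no solution $\k$ in Tables~\ref{THermann} and~\ref{TSymSym}, while F\,I-II-II is ruled out exactly as in Example~\ref{Eg2}, since the unique numerical candidate $\k\cong\sp3+\R$ fails because $\sp3$ is not a subalgebra of $\so9$. The only difference is presentational: you make explicit the table values and the choice of pairing that the paper leaves as "follows immediately from the tables."
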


\begin{proof}
The non-existence of type F\,I-II-II was already shown in Example~\ref{Eg2}.
The non-existence of type E\,I-III-IV or E\,II-IV-IV spaces follows immediately
from Tables~\ref{THermann} and \ref{TSymSym}. For a pair of involutions of type
E\,VI-VI on $\LE_7$ we have $d = -5$, see Table~\ref{THermann}. It follows from
Table~\ref{TSymSym} that there is no symmetric subalgebra $\k$ of $\h = \su8$
or $\h = \Le_6 + \R$ such that $c(\su8,\k)=-5$; this shows there are no \ZZss\
of type E\,V-VI-VI or E\,VI-VI-VII. Now consider a pair of involutions of type
E\,VII-VII on $\LE_7$; by Table~\ref{THermann} we have $d = -25$ and it follows
from Table~\ref{TSymSym} that $\k \cong \so{10} + \R + \R$ or $\k \cong \Lf_4$;
since neither occurs as a subalgebra of $\su8$, we have proved the
non-existence of a \ZZs\ of type E\,V-VII-VII.
\end{proof}

\begin{remark}\label{RemConj}\rm
In each row of Table~\ref{TExcSpaces}, only the isomorphism type of the
subalgebra $\k \subset \g$ and the type of the \ZZs\ (i.e.\ the entry
in the first column defines the isomorphism type of the three symmetric
subalgebras $\gs, \gt, \gst$ of~$\g$) is given. However, this information is sufficient
to determine the conjugacy class of the subalgebra $\k \subset \g$ in most
cases, since $\k \subset \h$ is also symmetric subalgebra for $\h = \gs, \gt,
\gst$: Assume that $\g$ is a compact Lie algebra and let $\k \subset \h \subset
\g$ and $\k' \subset \h' \subset \g$ be such that all inclusions are
of symmetric subalgebras and such that $\k \cong \k'$, $\h \cong \h'$. Then
there is an automorphism $\eta \in \Aut(\g)$ such that $\eta(\h') = \h$. In
most cases (cf.\ \cite{hyperpolar}, 3.1) there is an inner automorphism
$\theta_0$ of $\h$ such that $\theta_0(\eta(\k'))=\k$. Then obviously
$\theta_0$ can be extended to an inner automorphism $\theta$ of~$\g$ such that
$\theta \circ \eta(\k') = \k$.

This shows that for most cases the subalgebra $\k$ is uniquely defined by the
data in Table~\ref{TExcSpaces} . The only exception occurs for the \ZZs\ of
type E\,VI-VI-VI with $\k \cong \u6 + \R$. Indeed, there are two conjugacy
classes of subalgebras isomorphic to $\La_5$ in $\Le_7$.  These two classes are
denoted as $[\LA_5]'$ and $[\LA_5]''$ in~\cite{dynkin1}. They are given by the
two diagrams below.
\begin{center}
$\lbk \LA_5 \rbk'$\begin{minipage}{108pt}
\begin{picture}(100,45)
\multiput(6,17)(16,0){5}{\circle*{3}} \multiput(6,17)(16,0){7}{\circle{3}}
\multiput(8.5,17)(16,0){6}{\line(8,0){11}}
\multiput(54,19.5)(0,16){1}{\line(0,8){11}} \put(54,33){\circle{3}}
\put(4,5){$$} \put(20,5){$$} \put(36,5){$$} \put(52,5){$$} \put(68,5){$$}
\put(44,30){$$}
\end{picture}
\end{minipage}\hspace{2em}
$\lbk \LA_5\rbk''$\begin{minipage}{108pt}
\begin{picture}(100,45)
\multiput(6,17)(16,0){4}{\circle*{3}} \multiput(6,17)(16,0){7}{\circle{3}}
\multiput(8.5,17)(16,0){6}{\line(8,0){11}}
\multiput(54,19.5)(0,16){1}{\line(0,8){11}} \put(54,33){\circle*{3}}
\put(4,5){$$} \put(20,5){$$} \put(36,5){$$} \put(52,5){$$} \put(68,5){$$}
\put(44,30){$$}
\end{picture}
\end{minipage}\end{center}
Note that these two subalgebras of $\Le_7$ are both contained in~$\so(12)
\subset \Le_7$ and, as subalgebras of~$\so(12)$, they are conjugate by an outer
automorphism of $\so(12)$; however, this automorphism cannot be extended to an
automorphism of~$\Le_7$. It follows from Tables~25 and~26, p.~204, 207
in~\cite{dynkin1} that only the subalgebra $\lbk \LA_5\rbk''$ gives rise to a
\ZZs.
\end{remark}

\begin{proof}[Proof of Theorem~\ref{ThMain1}]
For each entry in Table~\ref{TExcSpaces} we have given a construction in
Section~\ref{Constructions} thus showing the existence of a corresponding \ZZs.
Lemma~\ref{LmNonEx} shows that the types of \ZZs\ not listed in
Table~\ref{TExcSpaces} indeed do not exist. (Note that on $\LE_6$, the
involutions of type E\,I and E\,IV are outer, while the involutions of type
E\,II and E\,III are inner; in particular, there cannot be \ZZss\ of type
E\,I-I-I or of type E\,I-II-II etc.)

However, it remains to be shown that the type of the \ZZs\ as given in
Table~\ref{TExcSpaces} indeed determines the conjugacy class of the fixed point
set $\k = \gs \cap \gt$ except for spaces of type E\,VI-VI-VI. In most cases,
it can be seen from Tables~\ref{THermann} and \ref{TSymSym} that the conjugacy
class of the subalgebra $\k \subset \g$ is uniquely determined by the type of
the \ZZs, i.e.\ in most cases, the conjugacy class of the subalgebra $\k$ is
uniquely determined by the triple $(\g,\h, c(\h,\k))$ where $c(\h,\k) = d$.
There are three exceptions to this general rule:

For $\g = \Le_6$ we have two entries in Table~\ref{TSymSym} with $\h = \su6 +
\sp1$ and $c(\h,\k)=2$, namely $\k = \su3 + \su3 + \R + \R$ and $\k = \so6 +
\sp1$. For the first possibility we have already shown the existence of a
corresponding \ZZs\ of type~E\,II-II-II.  The second possibility would imply
the existence of a \ZZs\ of type~E\,II-II-II with $\k = \so6 + \sp1 \subset
\su6 + \sp1$ such that the isotropy representation of~$\k$ on~$\g / \k$
restricted to~$\so6$ consists of a sum of three isotypical summands; however
this is not the case according to \cite{dynkin1}, Tables~25, 26, pp.~200, 206.

There are also two entries in Table~\ref{TSymSym} with $\h = \su6 + \sp1$ and
$c(\h,\k)= -6$, namely $\k = \suxu42 + \sp1$ and $\k = \sp3 + \R$. For the
first we have already shown the existence of a \ZZs\ of type E\,II-II-III in
Example~\ref{Eg1}; the second one can be ruled out since $\sp3 + \R$ is not a
symmetric subalgebra of~$\so(10) + \R$.

Finally, for $\g = \Le_7$ there are two items in Table~\ref{TSymSym} with $\h =
\so(12) + \sp1$ and $c(\h,\k)=-5$, namely $\k = \so8 + \so4 + \sp1$ and $\k =
\u6 + \R$. We have shown the existence of a corresponding \ZZs\ of type
E\,VI-VI-VI for both cases in Examples~\ref{Eg1} and~\ref{Eg2}, respectively.
\end{proof}


\section{The classification in the {\rm Spin(8)} case}
\label{SpinClass}


In \cite{bg}, the \ZZs\ of the classical groups have been determined except for
$G \cong \Spin8$. This case requires special attention since the group of outer
automorphisms $\Out(\Spin8) = \Aut(\Spin8)/\Inn(\Spin8)$ is isomorphic to the
symmetric group on three letters, whereas for all other simple compact Lie
groups the group $\Out(G)$ is either trivial or isomorphic to~$\Z_2$. In this
section, we will classify the \ZZss\ with $G = \Spin8$. It was remarked
in~\cite{bg} that the result can be obtained from~\cite{dc}. We give an
independent proof, using an analogous method as in the exceptional case.

We recall~\cite{hyperpolar} that all connected symmetric subgroups
of~$\Spin(8)$ are conjugate by some automorphism of $\Spin(8)$ to one of the
groups
$$
\Spin(8-n) \cdot \Spin(n), \qquad n = 1,2,3,4.
$$
However, if one considers conjugacy classes with respect to inner
automorphisms, there are three conjugacy classes of symmetric subgroups locally
isomorphic to one of $\Spin7$, $\Spin6 \cdot \Spin2$, $\Spin5 \cdot \Spin3$,
respectively, and only one conjugacy class of subgroups locally isomorphic to
$\Spin4 \cdot \Spin4$, cf.~\cite{hyperpolar}, Proposition~3.3. Hence there are
ten conjugacy classes (w.r.t.\ inner automorphisms) of connected symmetric
subgroups in $\Spin8$; under the covering map $\Spin8 \to \SO8$, they
correspond to the conjugacy classes (w.r.t.\ inner automorphisms) of connected
(locally) symmetric subgroups in $\SO8$ given by $\SO7$, $\Spin7^+$,
$\Spin7^-$, $\SO6 \times \SO2$, $\U4$, $\a(\U4)$, $\SO5 \times \SO3$, $\Sp2
\cdot \Sp1$, $\a(\Sp2 \cdot \Sp1)$, $\SO4 \times \SO4$, where
$\a=i_{\diag(-1,1,\ldots,1)}$.

\begin{table}[h]\rm
\begin{tabular}{|c|c|c|}
\hline \str $G^{\s}$ & $G^{\t}$ & $d$ \\
\hline \hline

 $\Spin7$ & $\Spin7$ & $-14$ \\ \hline

 $\Spin7$ & $\Spin6 \cdot \Spin2$ & $-9$ \\ \hline

 $\Spin7$ & $\Spin5 \cdot \Spin3$ & $-6$ \\ \hline

 $\Spin7$ & $\Spin4 \cdot \Spin4$ & $-5$ \\ \hline

 $\Spin6 \cdot \Spin2$ & $\Spin6 \cdot \Spin2$ & $-4$ \\ \hline

 $\Spin6 \cdot \Spin2$ & $\Spin5 \cdot \Spin3$ & $-1$ \\ \hline

 $\Spin6 \cdot \Spin2$ & $\Spin4 \cdot \Spin4$ & $0$ \\ \hline

 $\Spin5 \cdot \Spin3$ & $\Spin5 \cdot \Spin3$ & $2$ \\ \hline

 $\Spin5 \cdot \Spin3$ & $\Spin4 \cdot \Spin4$ & $3$ \\ \hline

 $\Spin4 \cdot \Spin4$ & $\Spin4 \cdot \Spin4$ & $4$ \\ \hline

\end{tabular}
\bl

\caption{Pairs of involutions on Spin(8).} \label{THSpinAcht}

\end{table}

\begin{example}\label{EgSpin81}\rm
We may construct standard examples of \ZZss\ for $\g = \so8$ using a pair of
commuting involutions both given by conjugation with diagonal matrices of the
form $\diag \lp \pm 1,\ldots, \pm 1 \rp$. In this way obtain \ZZss\ with $\k =
\so{n_1} + \so{n_2} + \so{n_3}$, $n_1 + n_2 + n_3 = 8$, $n_i \ge 1$ or $\k =
\so{n_1} + \so{n_2} + \so{n_3} + \so{n_4}$, $n_1 + n_2 + n_3 + n_4 = 8$, $n_i
\ge 1$.
\end{example}

\begin{example}\label{EgSpin82}\rm
Now assume the subalgebra $\so6 + \so2 \cong \u4 \subset \so8$ is embedded as
$$
\lbc \left(
       \begin{array}{cc}
         A & -B^t \\
         B & A \\
       \end{array}
     \right) \mid A + iB \in \u4 \rbc.
$$
We may define a pair of commuting involutions of~$\so8$ using conjugation
with the matrices
$$
J = \left(
       \begin{array}{cc}
          & -I \\
         I &  \\
       \end{array}
     \right)\quad \mbox{and}\quad
A = \left(
       \begin{array}{cc}
         D & 0 \\
         0 & D \\
       \end{array}
     \right),
$$
respectively, where $I$ is the $4 \times 4$-identity matrix and $D$ is either
$\diag(-1,1,1,1)$ or $\diag(-1,-1,1,1)$. We obtain \ZZss\ with $\k = \u1 + \u3$
or $\k = \u2 + \u2$. Note however that the subalgebra $\u2 + \u2$ of $\so8$ is
conjugate to $\so4 + \so2 + \so2$ via an outer automorphism.
\end{example}

\begin{proof}[Proof of Theorem~\ref{ThMain2}]
Assume $\s$ and $\t$ are two commuting involutions of~$\Spin8$. Then the
$\g^{\s\t}$ is a symmetric subalgebra of $\g = \so8$ by
Proposition~\ref{PropSplitting}. Hence we may assume (by applying an
automorphism of $\so8$) that $\g^{\s\t}$ is one of the subalgebras $\h$ of
$\so8$ as given Table~\ref{TSymSpin}. Since $\gs \cap \gt \subset \h$ is a
symmetric subalgebra of $\h$, it is conjugate (by some automorphism~$\theta_0$
of $\h$) to one of the subalgebras $\k$ as from Table~\ref{TSymSpin} such that
$c(\h,\k) = d(\s,\t)$.

Now we show that any automorphism~$\theta_0$ of $\h$ can be extended to an
automorphism $\theta$ of $\so8$ such that $\theta(\h) = \h$ and
$\theta|{\h}=\theta_0$: This is obvious for inner automorphisms of~$\h$ and
there is only one case where $\h$ has outer automorphisms, namely $\h = \so4 +
\so4 \cong 4 \cdot \La_1$. After choosing a Cartan subalgebra of $\g$ contained
in $\h$, and a set of simple roots $\a_1,\,\ldots,\a_4$ we can define diagram
automorphisms which act by permuting the simple roots $\a_1, \a_2, \a_3$ and
hence by permuting the corresponding three of the four $\La_1$-summands of
$\h$. In addition, consider the inner automorphism $i_g$ with $g = \lp
\begin{smallmatrix} &I\\I&
\end{smallmatrix} \rp$, $I = \diag(1,1,1,1)$. This acts on $\h = \so4 + \so4$ by
interchanging the two $\so4$-summands. These automorphisms together with the
inner automorphisms of $\h$ extended to $\g$ generate a subgroup of $\Aut(\g)$
isomorphic to $\Aut(\h)$.

Hence there is an automorphism~$\varphi$ of~$\so8$ such that $\varphi(\gs \cap
\gt)$ is one the subalgebras $\k \subset \so8$ as given in
Table~\ref{TSymSpin}. Conversely, it is straightforward to see that for each
candidate $(\h,\k)$ one can construct a corresponding \ZZs\ using
Examples~\ref{EgSpin81} or \ref{EgSpin82}.
\end{proof}

\begin{table}[h]\rm
\begin{tabular}{|c|c|c|c|}
\hline \str $\g$ & $\h$ & $\k$ & $c(\h,\k)$   \\
\hline \hline


 $\so(8)$ & $\so7$ &  $\so(7-n) + \so n$  & $-9, -1, 3$ \\
 $$ & $$ &  $n=1,2,3$  & $$ \\ \hline

 $\so(8)$ & $\so6 + \so2$ &  $\so(6-n) + \so n + \so2$  & $-6, 0, 2$ \\
 $$ & $$ &  $n=1,2,3$  & $$ \\ \hline

 $\so(8)$ & $\so6 + \so2$ &  $\so(6-n) + \so n$  & $-14, -4, 2, 4$ \\
 $$ & $$ &  $n=0,1,2,3$  & $$ \\ \hline

 $\so(8)$ & $\so6 + \so2$ &  $\u3 + \u1$  & $-4$ \\ \hline

 $\so(8)$ & $\so6 + \so2$ &  $\u3$  & $-2$ \\ \hline

 $\so(8)$ & $\so5 + \so3$ &  $\so(5-n) + \so n + \so 3$  & $-5, -1$ \\
 $$ & $$ &  $n=1,2$  & $$ \\ \hline

 $\so(8)$ & $\so5 + \so3$ &  $\so(5-n) + \so n + \so2$  & $-9, -1, 3$ \\

 $$ & $$ &  $n=0,1,2$  & $$ \\ \hline

 $\so(8)$ & $\so4 + \so4$ &  $(4-n)\cdot\sp1 + n\cdot\so2$  & $-8, -4, 0, 4$ \\
 & $$ $$ &  $n=1,2,3,4$  & $$ \\ \hline

 $\so(8)$ & $\so4 + \so4$ &  $\so3 + \so4$  & $-6$ \\ \hline

 $\so(8)$ & $\so4 + \so4$ &  $\so3 + \so3$  & $0$ \\ \hline

 $\so(8)$ & $\so4 + \so4$ &  $\so3 + \u2$  & $-2$ \\ \hline

 $\so(8)$ & $\so4 + \so4$ &  $\so3 + \so2 + \so2$  & $2$ \\ \hline

\end{tabular}
\bl \caption{Symmetric subalgebras of symmetric subalgebras of $\mathfrak s
\mathfrak o (8)$.} \label{TSymSpin}
\end{table}


\bibliographystyle{amsplain}

\begin{thebibliography}{XyzXyz2}

\bibitem{bg} Y.\ Bahturin, M.\ Goze: {$\Z_2 \times \Z_2$--symmetric spaces}. Pacific Journal of
Math. {\bf 236} no.\ 1, 1--21 (2008)

\bibitem{conlon2} L.\ Conlon: {Classification of affine root systems and applications to the theory of
symmetric spaces}. Mimeographed Notes, Washington University, St. Louis, Mo.,
(1968)

\bibitem{conlon} L.\ Conlon: {Remarks on commuting involutions}. Proc.\
    Amer.\ Math.\ Soc.\ {\bf 22}, 255--257 (1969)

\bibitem{dc} C.\ Draper; C.\ Martin:, Gradings on $\Lg_2$. Linear
    Algebra Appl.\ {\bf 418}, 85--111 (2006)

\bibitem{dynkin1} E.B.\ Dynkin: {Semisimple subalgebras of the semisimple Lie
    algebras} (Russian). Mat.\ Sbornik {\bf 30}, 349--462 (1952); English
    translation: Amer.\ Math.\ Soc.\ Transl.\ Ser.~2, {\bf 6}, 111--244 (1957)

\bibitem{graywolf} A.\ Gray, J.A.\ Wolf: {Homogeneous spaces defined by Lie
    group automorphisms I, II.} J.\ Differential Geom.\ {\bf 2}, part I:
    77--114; part II: 115--159 (1968)

\bibitem{helgason} S.\ Helgason: {Differential geometry, Lie groups and
    symmetric spaces}. Academic Press (1978)

\bibitem{hermann1} R.\ Hermann: {Variational completeness for compact symmetric
    spaces}. Proc.\ Amer.\ Math.\ Soc.\ {\bf 11,} 544--546 (1960)

\bibitem{hermann2} R.\ Hermann: {Totally geodesic orbits of groups of
    isometries.} Nederl.\ Akad.\ Wet., Proc., Ser. A 291--298 (1962).

\bibitem{hptt} E.\ Heintze, R.\ Palais, C.-L.\ Terng, G.\ Thorbergsson: {\it
    Hyperpolar actions on symmetric spaces}. Geometry, topology and physics for
    Raoul Bott, ed.\ S.-T.\ Yau, International Press, 214--245 (1994)

\bibitem{humphries} J.E.\ Humphries: {Introduction to Lie Algebras and
    Representation Theory}. Springer (1972)

\bibitem{hyperpolar} A.\ Kollross: {A classification of hyperpolar and
    cohomogeneity one actions}. Trans.\ Amer.\ Math.\ Soc.\ {\bf 354}, 571--612
    (2002)

\bibitem{lutz} R.\ Lutz: {Sur la g\'{e}om\'{e}trie des espaces $\Gamma$-sym\'{e}triques.} C.\ R.\ Acad.\
    Sci.\ Paris S\'{e}r.\ I Math.\ {\bf 293}, no.\ 1, 55--58 (1981)

\bibitem{oniscikBook} A.L.\ Oni\v{s}\v{c}ik: {Topology of transitive
    transformation groups}. Johann Ambrosius Barth, Leipzig (1994)

\bibitem{wolfIrr} J.A.\ Wolf: {The geometry and structure of isotropy
    irreducible homogeneous spaces}. Acta Math.\ {\bf 120}, 59-148 (1968);
    correction, Acta Math.\ {\bf 152}, 141--142 (1984)

\end{thebibliography}

\pagestyle{empty}
\end{document}